%------------------------------------------------------------------------------
% Beginning of journal.tex
%------------------------------------------------------------------------------
%
% AMS-LaTeX 1.2 sample file for journals, based on amsart.cls.
%
% Replace amsart by the documentclass for the target journal, e.g. tran-l.
%

\documentclass[12pt,thmsa, reqno]{amsart}
\usepackage{amsfonts}
\usepackage{amssymb, euscript, mathrsfs}

\newtheorem{theorem}{Theorem}[section]
\newtheorem{lemma}[theorem]{Lemma}

\newtheorem{remark}[theorem]{Remark}

\numberwithin{equation}{section}

%    Absolute value notation

%    Blank box placeholder for figures (to avoid requiring any
%    particular graphics capabilities for printing this document).

% mathrsfs letters

\def\Ms{\mathscr{M m 1234}}

\def\Cs{\mathscr{C c 1234}}

\def\Cal{\mathcal}

\def\C{{\Cal C}}

\def\S{{\Cal S}}

\def\I{{\Cal I}}

\def\d{\partial}
\def\tr{{\hbox{\rm tr}}}

\def\Ma{\frM_{n,m}}

\def\Z{\mathcal{Z}}

\def\gnk{G_{n,k}}

\def\f0{f_0}
\def\Fc0{\varphi_0}

\def\I_k {I_{-}^{k/2}}
\def\I+k {I_{+}^{k/2}}

\def\vnk{V_{n,k}}
\def\vnm{V_{n,m}}

\def\cd{\stackrel{*}{\C}\!{}_{m, k}^\a}
\def\sd{\stackrel{*}{\S}\!{}_{m, k}^\a}
\def\cd0{\stackrel{*}{\C}\!{}_{m, k}^\a}
\def\sd0{\stackrel{*}{\S}\!{}_{m, k}^\a}
\def\fd{\stackrel{*}{F}\!{}_{\!m, k}}

\def\ncd0{\stackrel{*}{\Cs}\!{}_{m, k}^\a}

\def\bbr{{\Bbb R}}

\def\bbn{{\Bbb N}}

\def\bbc{{\Bbb C}}

\def\rank{{\hbox{\rm rank}}}

\def\tr{{\hbox{\rm tr}}}

\def\cos{{\hbox{\rm cos}}}
\def\det{{\hbox{\rm det}}}

\def\Pr{{\hbox{\rm Pr}}}

\def\gnk{G_{n,k}}
\def\gnm{G_{n,m}}

\def\part{\partial}
\def\intl{\int\limits}
\def\b{\beta}

\def\Gam{\Gamma}
\def\Om{\Omega}
\def\a{\alpha}
\def\om{\omega}

\def\Del{\Delta}
\def\del{\delta}
\def\vp{\varphi}

\def\gam{\gamma}
\def\Lam{\Lambda}
\def\sig{\sigma}
\def\lam{\lambda}

\def\t{\tau}

%\def \lv{{\bf \lam}}

%Gothic letters
\font\frak=eufm10

\def\fr#1{\hbox{\frak #1}}

\def\frM{\fr{M}}

\def\cos{{\hbox{\rm cos}}}
\def\det{{\hbox{\rm det}}}

\def\gm{\Gamma_m}
\def\tr{{\hbox{\rm tr}}}
\def\part{\partial}
\def\intl{\int\limits}
\def\b{\beta}

\def\Gam{\Gamma}
\def\Om{\Omega}
\def\a{\alpha}

%\def\trd{\stackrel{*}{R}\!{}^\a}

%end my notation

\def\sideremark#1{\ifvmode\leavevmode\fi\vadjust{\vbox to0pt{\vss% the remark
 \hbox to 0pt{\hskip\hsize\hskip1em%                          will appear only
\vbox{\hsize2cm\tiny\raggedright\pretolerance10000%          on the side
 \noindent #1\hfill}\hss}\vbox to8pt{\vfil}\vss}}}%
                                                   %          in 2cm

                                                   %          wide box
                                                   %

\newcommand{\be}{\begin{equation}}
\newcommand{\ee}{\end{equation}}
\newcommand{\bea}{\begin{eqnarray}}
\newcommand{\eea}{\end{eqnarray}}
\newcommand{\Bea}{\begin{eqnarray*}}
\newcommand{\Eea}{\end{eqnarray*}}

\begin{document}

\title[  ]
{Funk, Cosine, and Sine Transforms on Stiefel and Grassmann manifolds, II}

\author{  B. Rubin }
\address{Department of Mathematics, Louisiana State University, Baton Rouge,
Louisiana 70803, USA}
\email{borisr@math.lsu.edu}

\thanks{ The work was
 supported  by  NSF grants PFUND-137 (Louisiana Board of Regents) and 
 DMS-0556157.}

\subjclass[2000]{Primary 44A12; Secondary 47G10}

%\date{February 22, 2010}

%\dedicatory{This paper is dedicated to our authors.}

\keywords{The Funk transform, the Radon  transform, the cosine transform, the sine transform.}

\begin{abstract}
We investigate analytic continuation of the matrix cosine and sine transforms introduced in Part I and  depending on a complex parameter $\a$. It is shown that the cosine transform corresponding to $\a=0$ is a constant multiple of the Funk-Radon  transform in integral geometry for a pair of Stiefel (or Grassmann) manifolds. The same case   for the sine transform gives the identity operator. These results and the relevant composition formula for the cosine transforms were established in Part I in the sense of distributions. Now we have them pointwise.
Some new problems are formulated.

 \end{abstract}

\maketitle

\section{Introduction}
\setcounter{equation}{0}

Let $\vnm$ and $\vnk$
 be  a pair of  Stiefel manifolds  of orthonormal frames in $\bbr^n$ of dimensions $m$ and $k$, respectively; $1\le m, k\le n-1$.
The following integral operators where introduced in \cite{Ru10}: \bea
\label{0mby}(\C^{\a}_{m, k} f)(u)&=&\int_{\vnm} \!\!\!f(v)\,
[\det \,(v'uu'v)]^{(\a-k)/2} \, d_*v,\\ \label{0mbyd}(\cd0
\vp)(v)&=&\int_{\vnk} \!\!\vp(u)\, [\det \,(v'uu'v)] ^{(\a-k)/2} \, d_*u,\\
\label{c0mby}(\S^{\a}_{m, k} f)(u)&=&\int_{\vnm} \!\!\!f(v)\, [\det \,(I_m
-v'uu'v)]^{(\a+k-n)/2} \, d_*v,\\
 \label{c0mbyd}(\sd0 \vp)(v)&=&\int_{\vnk} \!\!\vp(u)\,[\det \,(I_m
-v'uu'v)]^{(\a+k-n)/2}  \, d_*u.\eea
Here
$ u \!\in\! V_{n,k}$, $ v
\!\in\! \vnm$, $d_*u$ and $d_*v$ stand for the relevant probability measures, $(\cdot )'$ denotes the transpose of a matrix $(\cdot )$.

 We call $\C^{\a}_{m, k} f$ and $\S^{\a}_{m, k} f$ the {\it
cosine transform} and the {\it sine transform} of $f$, respectively.
 Integrals $\cd0 \vp$ and $\sd0 \vp$ are called the {\it dual cosine
transform} and the {\it dual sine transform}. The terminology stems from
the fact that, in the case $k=m=1$, when $u$ and $v$ are unit vectors, $$\det \,(v'uu'v)=(u\cdot v)^2=\cos^2 \om, \quad \det \,(I_m
-v'uu'v)=1-(u\cdot v)^2=\sin^2 \om,$$
 where $\om$ is the angle between $u$ and $v$. For integrable functions $f$ and $\vp$, integrals (\ref{0mby})-(\ref{c0mbyd}) converge absolutely almost everywhere
 if and only if $Re\,\a>m-1$ and represent analytic functions of $\a$ in this domain; see \cite[Theorem 4.2]{Ru10}.

The present article is a continuation of our previous work \cite{Ru10} devoted to the study of  operators (\ref{0mby})-(\ref{c0mbyd}). The impetus
 for this research was given by the celebrated Matheron's conjecture  in stochastic geometry   \cite[p. 189]{Mat} and a series of related publications in the area of harmonic analysis and integral geometry due to Goodey and Howard
\cite{GH1},  Alesker and Bernstein
\cite{AB},  Alesker \cite{A}, Ournycheva  and  Rubin \cite {OR4, OR3},  Zhang \cite{Zh2}; see \cite{Ru10} for the detailed exposition  of the history and motivation  of this research.

In the present paper we suggest an elementary approach to analytic continuation of the cosine transform, which enables us to study analytic properties of the sine transform and the duals of both transforms. We answer a series of questions stated in \cite{Ru10} and related to pointwise  equalities, the validity of which was proved in Part I only in the sense of distributions.

 In section 2 we recall basic definitions and auxiliary facts, which are used in the proofs.
 Main results are presented by Theorems
 \ref{lhgn},  \ref{lhgns},  \ref{lhdu} in sections 3,4 and 5, respectively. Section 6 contains some consequences, including pointwise inversion of the relevant Funk-Radon transform and a composition formula (Theorems \ref{cr72} and \ref {cr24n}). We conclude with brief discussion of the so-called $ \mbox{Cos}^\lam$- transform, which represents a particular case $k=m$ of (\ref{0mby}) and differs from  the latter by notation. Such transforms arise in group representations \cite{DM, OP, Pa}.

 {\bf Acknowledgements.} I am thankful to  Professors  Tomoyuki Kakehi, Gestur \`Olafsson,  Angela Pasquale, and  Genkai Zhang for useful discussions.

\section{Preliminaries}

More information about  facts presented below can be found in \cite{GR, OR04, Ru06, Ru10}.

\subsection{Notation and conventions} Given a square matrix $a$,  $|a|$ stands for the absolute value of
  $\det(a)$.  As usual,  $O(n)$   and $SO(n)$ stand for the orthogonal group and the
 special orthogonal group of $\bbr^{n}$, respectively, with the  normalized
 invariant measure of total mass 1. The abbreviation ``$a.c.$'' denotes
 analytic continuation.

 Let  $\frM_{n,m}\sim\bbr^{nm}$ be  the
space of real matrices $x=(x_{i,j})$ having $n$ rows and $m$
 columns;  $dx=\prod^{n}_{i=1}\prod^{m}_{j=1} dx_{i,j}$;
   $x'$ is  the transpose of  $x$, $|x|_m=\det
(x'x)^{1/2}$, $I_m$
   is the identity $m \times m$
  matrix, and $0$ stands for zero entries.

Let $\S_m \sim \bbr^{m(m+1)/2}$  be the space of $m \times m$ real
symmetric matrices $r=(r_{i,j})$; $dr=\prod_{i \le j} dr_{i,j}$. We
denote by  $\Omega$   the cone of positive definite matrices in $\S_m$.
The  Siegel gamma  function of $\Omega$
 is defined by
\be\label{2.4}
 \gm (\a)\!=\!\int_{\Omega} \!\exp(-\tr (r)) |r|^{\a -(m+1)/2}\, dr
 \!=\!\pi^{m(m-1)/4}\prod\limits_{j=0}^{m-1} \Gam (\a\!-\! j/2).  \ee
 This integral is absolutely convergent if and only if $Re
 \, \a >(m-1)/2$, and extends  meromorphically  with
 the  polar set  \be\label {09k}\{(m-1-j)/2: j=0,1,2,\ldots\};\ee see
\cite{Gi}, \cite{FK},
 \cite{T}.

For $n\geq m$, let $\vnm= \{v \in \frM_{n,m}: v'v=I_m \}$
 be  the Stiefel manifold  of orthonormal $m$-frames in $\bbr^n$. This is a homogeneous space
with respect to the action $\vnm \ni v\to \gam v $, $\gam\in O(n)$,
so that   $\vnm=O(n)/O(n-m)$.
We  fix a measure $dv$ on $\vnm$, which is left $O(n)$-invariant,
right $O(m)$-invariant, and
  normalized by \be\label{2.16} \sigma_{n,m}
 \equiv \int_{\vnm} dv = \frac {2^m \pi^{nm/2}} {\gm
 (n/2)}, \ee
\cite[p. 70]{Mu}. The notation $d_\ast v=\sig^{-1}_{n,m} dv$ is used for the corresponding probability measure.

We denote by $G_{n,m}$ the Grassmann manifold of $m$-dimensional linear subspaces $\xi$ of $\bbr^n$ equipped with the $O(n)$-invariant probability measure $d_*\xi$. Every right $O(m)$-invariant function $f (v)$  on $V_{n,m}$ can be identified with a function $\tilde f (\xi)$ by the formula
$\tilde f (\{v\})=f(v)$, $ \{v\}=v\bbr^m\in G_{n,m}$,  so that $\int_{G_{n,m}} \tilde f (\xi)d_*\xi=\int_{V_{n,m}} f (v)d_*v$. Another identification is also possible, namely, $\stackrel{*}{f} (\{v\}^\perp)=f(v)$, $ \{v\}^\perp\in G_{n,n-m}$.

\begin{lemma}\label{l2.3} {\rm (The polar decomposition).} Let $x \in \frM_{n,m}, \; n \ge m$. If  $\rank (x)=
m$, then
\be\label{pol} x=vr^{1/2}, \qquad v \in \vnm,   \qquad r=x'x \in\Omega,\ee
and $dx=2^{-m} |r|^{(n-m-1)/2} dr dv$.
\end{lemma}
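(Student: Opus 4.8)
The plan is to treat the algebraic factorization and the Jacobian identity separately; the second part is a soft argument based on invariance rather than a direct differentiation of the matrix square root.

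\emph{The factorization.} Since $\rank(x)=m$, the Gram matrix $r:=x'x$ is positive definite, so $r\in\Omega$ and, by the spectral theorem, it has a unique positive definite square root $r^{1/2}$ (with inverse $r^{-1/2}$). Put $v:=xr^{-1/2}$; then $v'v=r^{-1/2}x'xr^{-1/2}=r^{-1/2}rr^{-1/2}=I_m$, so $v\in\vnm$ and $x=vr^{1/2}$. Any such factorization forces $x'x=r^{1/2}(v'v)r^{1/2}=r$, hence $v=xr^{-1/2}$, so it is unique; consequently $\Phi\colon(v,r)\mapsto vr^{1/2}$ is a bijection of $\vnm\times\Omega$ onto the open, conull set $\{x\in\frM_{n,m}:\rank(x)=m\}$, smooth with smooth inverse there (the complementary set, cut out by the vanishing of all $m\times m$ minors, is $dx$-null).

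\emph{Reducing the Jacobian to a power of $\det r$.} Because $\Phi$ is a diffeomorphism onto a conull set, $dx$ pulls back to a positive density times $dr\,dv$, so
\[
\int_{\frM_{n,m}} f(x)\,dx=\int_{\vnm}\!\int_{\Omega} f(vr^{1/2})\,J(v,r)\,dr\,dv
\]
for all integrable $f$. Replacing $x$ by $\gam x$ with $\gam\in O(n)$ leaves $dx$ and $r$ unchanged and sends $v\mapsto\gam v$; since $dv$ is $O(n)$-invariant and $O(n)$ acts transitively on $\vnm$, this forces $J=J(r)$. Next, for $g\in GL(m,\bbr)$ replace $x$ by $xg$: on $\frM_{n,m}$ this multiplies $dx$ by $|\det g|^{n}$ (it acts on each of the $n$ rows by $g$); it sends $r=x'x$ to $g'rg$, multiplying $dr$ by $|\det g|^{m+1}$ (the standard transformation rule on $\S_m$); and since
\[
xg=vr^{1/2}g=(vw)(g'rg)^{1/2},\qquad w:=r^{1/2}g\,(g'rg)^{-1/2},\quad w'w=I_m,
\]
the frame changes only by the right $O(m)$-factor $w$, under which $dv$ is invariant. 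Equating the two resulting expressions for $\int f(xg)\,dx$ gives $J(g'rg)\,|\det g|^{m+1}=|\det g|^{n}J(r)$, i.e. $J(g'rg)=|\det g|^{\,n-m-1}J(r)$. Taking $r=I_m$ and $g=s^{1/2}$ for arbitrary $s\in\Omega$ yields $J(s)=J(I_m)\,|s|^{(n-m-1)/2}$, so $J(r)=c\,|r|^{(n-m-1)/2}$ with $c:=J(I_m)$.

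\emph{Evaluating $c$.} Apply the displayed identity to $f(x)=\exp(-\tr(x'x))=\exp(-\sum_{i,j}x_{i,j}^2)$. The left side is a product of $nm$ Gaussian integrals, equal to $\pi^{nm/2}$; the right side is $c\int_{\vnm}dv\int_{\Omega}\exp(-\tr r)\,|r|^{(n-m-1)/2}\,dr=c\,\sigma_{n,m}\,\gm(n/2)$ by (\ref{2.4}) with $\a=(n-m-1)/2+(m+1)/2=n/2$ (convergent since $n\ge m$). By (\ref{2.16}), $\sigma_{n,m}\,\gm(n/2)=2^{m}\pi^{nm/2}$, so $c=2^{-m}$, which is the asserted constant.

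The main obstacle is the middle step: one must see that the Jacobian density depends on $r$ only through $\det r$, and for this the scaling and $O(n)\times O(m)$ invariances alone do not suffice — one needs the full $GL(m)$-covariance of the decomposition, whose crucial point is that right multiplication $x\mapsto xg$ disturbs $v$ only by an orthogonal factor $w$. The remaining ingredients (the rule $d(g'rg)=|\det g|^{m+1}\,dr$ on $\S_m$ and the Gaussian/Siegel-gamma evaluation of $c$) are routine, and the whole statement is in any case classical (cf. \cite{Mu}).
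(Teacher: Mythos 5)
The paper gives no proof of this lemma at all: it is stated as a classical fact with references to \cite{Herz}, \cite{Mu}, \cite{FK}, so there is nothing in the text to compare your argument against step by step. Your proof is correct and complete as a self-contained derivation. The factorization part is handled properly (positive definiteness of $x'x$ from the rank hypothesis, the unique positive definite square root, uniqueness of the pair $(v,r)$, and the observation that the rank-deficient set is $dx$-null). The Jacobian part is the standard relative-invariance argument: left $O(n)$-invariance removes the dependence on $v$; the $GL(m,\bbr)$-covariance, whose crucial point you correctly isolate (right multiplication $x\mapsto xg$ changes the frame only by the orthogonal factor $w=r^{1/2}g(g'rg)^{-1/2}$, under which $dv$ is invariant), combined with $dx\mapsto|\det g|^{n}dx$ and $dr\mapsto|\det g|^{m+1}dr$, yields the functional equation $J(g'rg)=|\det g|^{n-m-1}J(r)$; and the choice $g=s^{1/2}$ pins down $J$ up to a constant. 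The normalization by the Gaussian $\exp(-\tr(x'x))$ together with (\ref{2.4}) and (\ref{2.16}) correctly produces $c=2^{-m}$, and the convergence of the Siegel integral at $\a=n/2$ is exactly the hypothesis $n\ge m$. This is essentially the argument of the cited sources (\cite{Mu} proves it by an explicit exterior-algebra computation; \cite{FK} by invariance considerations close to yours), so your proposal supplies a sound proof of a statement the paper deliberately leaves to the literature.
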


For this statement see, e.g.,  \cite{Herz},  \cite{Mu},
\cite{FK}.

\subsection{Zeta integrals} Let $S(\Ma)$ be the Schwartz space of infinitely differentiable functions on $\Ma$ which are rapidly decreasing together with derivatives of all orders.  Suppose that $n\geq m\ge 2$ and
consider the  zeta   integral \be\label{zeta}
\Z(f,\a-n)=\intl_{\Ma} f(x) |x|^{\a-n}_m dx, \qquad f\in   S(\Ma).\ee

\begin{lemma}\label{lacz}
 The integral (\ref{zeta})
is absolutely convergent if $Re\, \a > m-1$ and  extends to $Re\, \a \leq m-1$
as a meromorphic function of $ \; \a$ with the only poles $ \;
m-1, m-2,\dots\;$. These poles and their orders are exactly the
same as of the gamma function $\gm(\a/2)$. The normalized  zeta
integral $ \Z(f,\a-n)/\gm(\a/2)$
 is an entire
function of $\a$.
\end{lemma}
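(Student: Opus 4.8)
The plan is to reduce the multi-dimensional zeta integral $\Z(f,\a-n)$ to a product of a one-dimensional Mellin-type integral and the Siegel gamma function, using the polar decomposition from Lemma \ref{l2.3}. First I would restrict attention to $x\in\Ma$ of rank $m$ (the complement has measure zero) and write $x=vr^{1/2}$, $v\in\vnm$, $r=x'x\in\Om$, so that by Lemma \ref{l2.3}
\be
\Z(f,\a-n)=2^{-m}\intl_{\vnm}\intl_{\Om} f(vr^{1/2})\,|r|^{(\a-m-1)/2}\,dr\,dv.
\ee
For $Re\,\a>m-1$ the exponent $Re\,(\a-m-1)/2>-1$ near the boundary of $\Om$ ensures local integrability, and rapid decay of $f$ handles the behaviour at infinity, so absolute convergence in that range is immediate; this matches the convergence range already recorded for $\Z$ in the statement.

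The key step is the meromorphic continuation. I would introduce the averaged radial profile
\be
F(r)=\intl_{\vnm} f(vr^{1/2})\,dv,\qquad r\in\Om,
\ee
which is a Schwartz-type function on $\S_m$ (smooth, rapidly decreasing, since $f\in S(\Ma)$ and the map $v r^{1/2}\mapsto$ is smooth away from $0$), and then recognize
\be
\Z(f,\a-n)=2^{-m}\intl_{\Om} F(r)\,|r|^{(\a-m-1)/2}\,dr
\ee
as a Riesz-type distribution on the cone $\Om$ evaluated at $F$, i.e. essentially the analytic family $I^{\a/2}$ built from $|r|^{\a-(m+1)/2}$. The meromorphic continuation of $\intl_{\Om}|r|^{s-(m+1)/2}\phi(r)\,dr$ in $s$, together with its polar set $\{(m-1-j)/2:j\ge 0\}$ and the fact that division by $\gm(s)$ produces an entire family, is classical (Gindikin, Faraut--Koranyi; it is exactly the mechanism behind \eqref{2.4}--\eqref{09k}). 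Substituting $s=\a/2$ turns the cone-parameter $s$ into $\a/2$ and the polar set $\{(m-1-j)/2\}$ of the $\gm$-integral into the set $\{m-1,m-2,\dots\}$ in the $\a$-variable, with matching orders; dividing by $\gm(\a/2)$ then yields an entire function of $\a$. One has to check that the orders of the poles of $\Z$ are no worse than those of $\gm(\a/2)$, not merely that the pole locations agree — this follows because the singular support of the Riesz distribution on $\Om$ lives only at $r=0$ and the local structure there is governed precisely by $\gm$.

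The main obstacle I anticipate is not the formal reduction but verifying that $F(r)$ is genuinely regular enough to feed into the cone zeta-function machinery: one must confirm that averaging $f(vr^{1/2})$ over the compact Stiefel manifold produces a function on $\overline{\Om}$ that is smooth up to and including the boundary stratum $\det r=0$ (where $r^{1/2}$ and hence $vr^{1/2}$ degenerate) and still rapidly decreasing. This is a standard but slightly delicate point — it is handled by noting that $x\mapsto x'x$ is a submersion on the rank-$m$ locus and that $f$, being Schwartz on all of $\Ma$, controls $F$ and all its derivatives uniformly; alternatively one can avoid the boundary issue by first proving the identity for $Re\,\a$ large, where everything converges, and then invoking uniqueness of analytic continuation once the meromorphic structure of the right-hand side is in hand. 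I would also record, as an alternative route, a direct induction on $m$ via the splitting $\Ma=\Mt\times\frM_{k,m}$-type coordinates used elsewhere in \cite{Ru10}, but the polar-decomposition argument is cleaner and ties the poles of $\Z$ directly to those of $\gm(\a/2)$ as claimed.
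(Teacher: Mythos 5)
The paper does not actually prove Lemma \ref{lacz}: it is quoted from \cite{Sh, Kh} and \cite[Lemma 4.2]{Ru06}, and the continuation mechanism the paper itself records and then exploits downstream is the Bernstein-type identity (\ref{vaz})--(\ref{oao}), which writes $\Z(f,\a-n)=B_\ell(\a)^{-1}\Z(\Del^\ell f,\a+2\ell-n)$ and reads the poles off the zeros of the polynomial $B_{\ell,m,n}$. Your route is genuinely different: polar decomposition (Lemma \ref{l2.3}) reduces $\Z(f,\a-n)$ to the Riesz-type integral $2^{-m}\int_{\Om}F(r)\,|r|^{\a/2-(m+1)/2}\,dr$ with $F(r)=\int_{\vnm}f(vr^{1/2})\,d v$, and the classical theory of Riesz distributions on the cone $\Om$ (Gindikin, Faraut--Kor\'anyi) gives the polar set $\{(m-1-j)/2\}$ in $s=\a/2$ and the entirety of the quotient by $\gm(\a/2)$. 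This is a legitimate and arguably more conceptual proof: it ties the poles of $\Z$ directly to those of $\gm(\a/2)$ rather than extracting them from the zero set of $B_\ell$, at the price of importing the cone machinery; the Bernstein route is more self-contained and is what makes formula (\ref{oaoc}) of the paper available. Note also that the lemma's ``exactly the same poles and orders'' should be read as an upper bound attained for generic $f$ (compare $f=\exp(-\tr(x'x))$, which by (\ref{2.4}) reproduces $\gm(\a/2)$ up to an entire nonvanishing factor); the Riesz-distribution entirety statement delivers precisely that upper bound, as you correctly anticipate.

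The one point where your justification does not hold up as written is the regularity of $F$ on $\overline\Om$. The boundary $\det r=0$ is the image of the rank-deficient matrices, and that is exactly where $x\mapsto x'x$ \emph{fails} to be a submersion, so your submersion argument does not apply there; and the fallback of ``prove the identity for $\mathrm{Re}\,\a$ large and continue'' is circular, since the meromorphic structure of the cone integral is only available once $F$ is known to be an admissible test function up to the boundary. The correct repair is the matrix analogue of Whitney's even-function theorem (G.~Schwarz's theorem on smooth invariants): the average $G(x)=\int_{O(n)}f(\gamma x)\,d\gamma$ is Schwartz on $\Ma$ and left $O(n)$-invariant, hence of the form $G(x)=F(x'x)$ with $F$ smooth and rapidly decreasing on all of $\S_m$, and this $F$ agrees with your spherical mean on $\Om$ because the Stiefel measure is the pushforward of Haar measure. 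With that substitution your argument is complete.
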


This statement can be found in  \cite{Sh, Kh}; see also  \cite [Lemma 4.2]{Ru06}. The {\it Cayley-Laplace operator} $\Del$  on the space $\Ma$ of
matrices $x=(x_{i,j})$ is  defined by \be\label{K-L} \Del=\det(\d
'\d). \ee Here $\partial$ is an $n\times m$  matrix whose entries
are partial derivatives $\d/\d x_{i,j}$. More information about this operator, which is neither elliptic nor hyperbolic, can be found in \cite{Kh, Ru06}. The following identity of the Bernstein
type holds:
 \be\label{vaz}\Del ^\ell
|x|_m^{\a+2\ell-n}=B_\ell(\a)|x|_m^{\a-n},\ee  \be\label{bka}
B_{\ell,m,n}(\a)=\prod\limits_{i=0}^{m-1}\prod\limits_{j=0}^{\ell-1}(\a-i+2j)(\a-n+2+2j+i);\ee
see \cite[p. 565]{Ru06}. It allows us to represent  meromorphic continuation  of $\Z(f,\a-n)$ in the  form
\be\label {oao}
\Z(f,\a-n)=\frac{1}{B_\ell(\a)}\, \Z(\Del ^\ell f,\a+2\ell-n), \qquad Re \,\a > m-1-2\ell,\ee$$  \ell=1,2, \ldots, \, .$$

\begin{lemma}\label{tzk} \cite[Theorem 4.4]{Ru06} For $f  \in  \S(\Ma)$,
\be\label {1pqpx}
 \underset
{\a=0}{a.c.} \,\frac{\Z(f,\a-n)}{\gm(\a/2)}=\frac{\pi^{nm/2}}{\gm(n/2)}\,f(0). \ee
\end{lemma}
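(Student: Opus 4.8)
The plan is to evaluate the limit as $\a\to 0$ of the normalized zeta integral $\Z(f,\a-n)/\gm(\a/2)$ by moving to a regime where the integral converges absolutely and the computation reduces to a one-variable Mellin-type calculation. First I would fix an integer $\ell\ge 1$ large enough that $-2\ell<m-1$, i.e. so that the shifted parameter $\a+2\ell$ lies in the half-plane $Re\,(\a+2\ell)>m-1$ of absolute convergence near $\a=0$, and invoke the Bernstein-type representation (\ref{oao}):
\[
\frac{\Z(f,\a-n)}{\gm(\a/2)}=\frac{1}{B_{\ell,m,n}(\a)\,\gm(\a/2)}\,\Z(\Del^\ell f,\a+2\ell-n).
\]
Since $\Del^\ell f\in\S(\Ma)$, the integral $\Z(\Del^\ell f,\a+2\ell-n)$ is continuous (indeed analytic) at $\a=0$ and equals $\intl_{\Ma}(\Del^\ell f)(x)\,|x|_m^{2\ell-n}dx$ there. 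So the whole problem is to compute $\lim_{\a\to 0}\bigl[B_{\ell,m,n}(\a)\,\gm(\a/2)\bigr]^{-1}$ times that constant, and to relate $\intl_{\Ma}(\Del^\ell f)(x)\,|x|_m^{2\ell-n}dx$ back to $f(0)$.

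The second step is the polar decomposition. By Lemma \ref{l2.3}, for $g\in\S(\Ma)$,
\[
\intl_{\Ma} g(x)\,|x|_m^{s-n}dx=2^{-m}\intl_{\vnm}\intl_{\Omega} g(vr^{1/2})\,|r|^{(s-n)/2}\,|r|^{(n-m-1)/2}\,dr\,dv,
\]
and the $r$-integral is a Siegel-type integral whose behavior as a function of $s$ is governed by $\gm$. Tracking the gamma factors: the normalized zeta integral $\Z(f,\a-n)/\gm(\a/2)$ is entire by Lemma \ref{lacz}, and its value at a point where things converge can be read off by letting $g=f$, $s=\a$, and observing that as $\a\to$ the convergence threshold the mass of $|r|^{(\a-m-1)/2}dr$ concentrates at $r=0$ — more precisely one writes $f(vr^{1/2})=f(0)+O(|r|^{1/2})$ and the leading term produces $f(0)$ times a ratio of $\gm$'s and the volume $\sigma_{n,m}=2^m\pi^{nm/2}/\gm(n/2)$ of $\vnm$ from (\ref{2.16}). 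The factor $2^{-m}\cdot\sigma_{n,m}=\pi^{nm/2}/\gm(n/2)$ is exactly the claimed constant, which is a reassuring consistency check. The remainder term contributes a pole-free (hence, after division by $\gm(\a/2)$, vanishing at $\a=0$) piece.

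Concretely I would compute $\underset{\a=0}{a.c.}\,\Z(f,\a-n)/\gm(\a/2)$ as follows: it suffices to handle $f$ supported near $0$ (a cutoff argument shows the part of $f$ vanishing near $0$ gives an entire zeta integral, which after dividing by $\gm(\a/2)$ — which has a pole at $\a=0$ — contributes nothing at $\a=0$). For such $f$, expand $f$ in a Taylor-type estimate about the origin; the constant term $f(0)$ gives, via polar decomposition, $f(0)\cdot 2^{-m}\sigma_{n,m}\intl_{\Omega}\psi(r)|r|^{(\a-m-1)/2}dr$ for a suitable Schwartz cutoff $\psi$ with $\psi(0)=1$, and by the very definition (\ref{2.4}) and standard analytic-continuation arguments for Siegel integrals, $\bigl[\intl_{\Omega}\psi(r)|r|^{(\a-m-1)/2}dr\bigr]/\gm(\a/2)\to 1$ as $\a\to 0$ (the normalized Siegel integral is entire and takes value $\psi(0)$ there — this is the matrix-space analogue of $\lim_{s\to 0}s\intl_0^\infty \psi(t)t^{s-1}dt=\psi(0)$, iterated over the $m$ diagonal directions). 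The higher Taylor terms carry extra factors of $|r|^{1/2}$ and thus shift the pole and vanish after normalization. Collecting, $\underset{\a=0}{a.c.}\,\Z(f,\a-n)/\gm(\a/2)=f(0)\cdot 2^{-m}\sigma_{n,m}=\pi^{nm/2}f(0)/\gm(n/2)$.

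The main obstacle is making rigorous the passage from "the mass of $|r|^{(\a-m-1)/2}dr$ concentrates at $r=0$" to the precise identification of the limit with $\psi(0)$ times the full gamma factor — i.e., proving the matrix-variable Tauberian/Mellin fact that the $\gm$-normalized Siegel integral of a Schwartz function is entire with value at $\a=0$ equal to the value of the function at the origin. This requires either iterating the one-variable result along a flag/diagonal reduction of $\Omega$, or citing the known analytic structure of Siegel zeta integrals (\cite{FK}, \cite{Gi}); in the paper's setup it is cleanest to deduce it directly from Lemma \ref{lacz} applied in the ``diagonal'' case, or simply to quote \cite[Theorem 4.4]{Ru06} as the cited source does. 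Everything else — the Bernstein reduction, the polar decomposition bookkeeping, and the cutoff argument — is routine.
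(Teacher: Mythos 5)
The paper offers no proof of this lemma at all: it is quoted directly from \cite[Theorem 4.4]{Ru06}. So your attempt has to stand on its own, and while your overall strategy --- polar decomposition $x=vr^{1/2}$ followed by the fact that the $\Gamma_m$-normalized Riesz--Siegel integral over the cone $\Omega$ is entire with value $\psi(0)$ at parameter $0$ --- is the standard and correct route, and your constant $2^{-m}\sigma_{n,m}=\pi^{nm/2}/\Gamma_m(n/2)$ is right, two of the steps you lean on are wrong as stated, and they are wrong exactly where the higher-rank case differs from the classical $m=1$ case.

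First, the cutoff argument fails for $m\ge 2$. The kernel $|x|_m^{\a-n}=\det(x'x)^{(\a-n)/2}$ is singular on the entire rank-deficient variety $\{x:\rank(x)<m\}$, a positive-dimensional cone reaching to infinity, not just at the origin. A function $f$ vanishing near $x=0$ therefore does \emph{not} have an entire zeta integral: the poles at $\a=m-1,m-2,\dots$ of Lemma \ref{lacz} are produced by all the lower-rank strata, and the continuations at the Wallach points are measures supported on those strata (as the paper recalls in the remarks after Theorem \ref{lhgn}). That only the origin survives at $\a=0$ after division by $\Gamma_m(\a/2)$ is essentially the content of the lemma and cannot be obtained by localizing near $0$. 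Second, the remainder estimate $f(vr^{1/2})=f(0)+O(|r|^{1/2})$ conflates the determinant $|r|=\det r$ with a matrix norm: the true bound is $O((\tr r)^{1/2})$, and a factor of $(\tr r)^{1/2}$ does not raise the exponent of $\det r$, hence does not visibly lower the order of the pole contributed by the boundary of $\Omega$ away from $r=0$. Both defects disappear if you drop the cutoff and the Taylor expansion and instead polar-decompose globally, writing $\Z(f,\a-n)=2^{-m}\int_\Omega F(r)\,|r|^{(\a-m-1)/2}\,dr$ with $F(r)=\int_{\vnm}f(vr^{1/2})\,d v$ and $F(0)=\sigma_{n,m}f(0)$; the lemma is then exactly the statement that the normalized Riesz distribution on $\Omega$ at parameter $0$ is $\delta_0$ (see \cite[Ch.~VII]{FK}), applied to $F$, plus a justification that the $O(m)$-average $F$ is an admissible test function up to $\partial\Omega$ (the matrix analogue of Whitney's even-function theorem). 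As written, your argument does not close these gaps, and the fact you defer to citation is not a side technicality but the whole substance of the result.
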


The general references related to zeta integrals are fundamental works by  
    Bopp and   Rubenthaler
\cite{BR},  Igusa \cite{Ig},  Shintani \cite{Shin}.
 
\subsection{The Funk Transform}
 The classical Funk transform  on the
unit sphere $S^{n-1}\subset \bbr^n$ is defined by
\be \label {7a3v}(Ff) (u)=\int_{\{v\in S^{n-1}: \, u\cdot v=0\}} f(v)\,d_u v,
\qquad u\in S^{n-1};\ee
see, e.g., \cite {GGG, He}. In \cite{Ru10} we introduced the following  generalization of (\ref{7a3v}), in which $u\in V_{n, k}$ and $v\in V_{n, m}$ are elements of the respective  Stiefel manifolds, $ 1\le k,m\le n-1$.
The {\it higher-rank Funk transform } sends a function $f$ on $V_{n, m}$ to a function $F_{m,k} f$ on $V_{n, k}$ by the formula
 \be \label {la3v}(F_{m,k} f) (u)=\int_{\{v\in V_{n, m}: \, u'v=0\}} f(v)\,d_u v,
\qquad u\!\in\! V_{n, k}.\ee
The corresponding dual transform
 \be \label {la3vd}(\fd \vp) (v)=\int_{\{u\in V_{n, k}: \, v'u=0\}} \vp(u)\,d_v u,
\qquad v\!\in\! V_{n, m},\ee
 acts in the opposite direction. The condition $u'v=0$ means that subspaces $u\bbr^k \in \gnk$ and $v\bbr^m \in \gnm$ are mutually orthogonal. Hence,    necessarily,  $$k+m\le n.$$ In the case $k=m$ we denote $F_m= F_{m,m}$.

 To give  our  transforms precise meaning,  we set $G=O(n)$,
 \bea
K_0&=&\left \{ \tau \in G : \tau = \left[\begin{array} {cc} \gam & 0
\\ 0 & I_{k}
\end{array} \right], \quad \gam  \in
O(n-k) \right \},\\
\label {klop7} \check K_0&=&\left \{ \rho \in G : \rho = \left[\begin{array} {cc}
\del & 0
\\ 0 & I_m \end{array} \right], \quad  \del
\in O(n-m) \right \}, \eea
\be\label {klop8} u_0= \left[\begin{array} {c}  0 \\  I_{k} \end{array} \right],
 \quad \check{u}_0\!= \! \left[\begin{array} {c}  I_k \\  0 \end{array}
\right]; \qquad  v_0\!=\! \left[\begin{array} {c}  0 \\  I_{m} \end{array} \right],
 \quad \check{v}_0\!=\! \left[\begin{array} {c}  I_m \\  0 \end{array}
\right ],\ee

\[ u_0,  \check{u}_0 \in  \vnk; \qquad  v_0,  \check{v}_0 \in \vnm.\]
Then (\ref{la3v}) and (\ref{la3vd}) can be explicitly written as \be\label{876a}
(F_{m, k}f) (u)=\int_{V_{n-k,m}} \!\!\!\!\!f\left(g_u
\left[\begin{array} {c} \om
\\0
\end{array} \right]\right)\,d_*\om\!=\!\int_{K_0} \!\!f(g_u \t
\check{v}_0)\,d\t,
 \ee
 \be \label{876ab}(\fd\vp) (v)=\int_{V_{n-m,k}} \!\!\!\!\!\vp\left(g_v
\left[\begin{array} {c} \theta
\\0
\end{array} \right]\right)\,d_*\theta\!=\!\int_{\check K_0} \!\!\vp(g_v \rho
\check{u}_0)\,d\rho,
 \ee
where  $ g_u$ and  $ g_v$ are orthogonal transformations satisfying $g_u u_0=u$
 and $g_v v_0=v$, respectively.

\begin{lemma} \cite [Lemma 3.2] {Ru10} Let $ 1\le k,m\le n-1$; $\,k+m\le n$. Then
\be\label{009a}\int_{V_{n, k}}(F_{m, k}f)(u)\, \vp(u)\,
d_*u=\int_{\vnm}f(v)\,(\fd \vp)(v)\, d_*v\ee provided that at
least one of these integrals is finite when $f$ and $\vp$ are
replaced by $|f|$ and $|\vp|$, respectively.
\end{lemma}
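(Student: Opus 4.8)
The plan is to lift both sides of (\ref{009a}) to integrals over $G=O(n)$ and then to identify them. Recall that for any function $h$ on $\vnk$ one has $\int_{\vnk} h(u)\,d_*u=\int_{G} h(gu_0)\,dg$, where $dg$ is the Haar probability measure on $G$, and likewise $\int_{\vnm} h(v)\,d_*v=\int_{G} h(gv_0)\,dg$. Using the coset form of the Funk transform in (\ref{876a}) with $g_u=g$ (so that $gu_0=u$) and of its dual in (\ref{876ab}) with $g_v=g$, the left side of (\ref{009a}) becomes $\int_{G}\big(\int_{K_0} f(g\t\check{v}_0)\,d\t\big)\vp(gu_0)\,dg$ and the right side becomes $\int_{G} f(gv_0)\big(\int_{\check K_0}\vp(g\rho\check{u}_0)\,d\rho\big)\,dg$. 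The hypothesis that one of the integrals in (\ref{009a}) converges absolutely after replacing $f,\vp$ by $|f|,|\vp|$ is precisely what licenses Fubini's theorem on $G\times K_0$ (resp.\ $G\times\check K_0$), so in either expression the order of integration may be interchanged.

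Next I would use that $K_0$ stabilizes $u_0$ and $\check K_0$ stabilizes $v_0$. In the left-hand integral, after interchanging the order, substitute $g\mapsto g\t^{-1}$ in the inner $G$-integral for each fixed $\t\in K_0$; by right invariance of $dg$ and $\t^{-1}u_0=u_0$ this inner integral equals $\int_G f(g\check v_0)\,\vp(gu_0)\,dg$, independent of $\t$, so integrating over $K_0$ (total mass $1$) gives
\[
\int_{\vnk}(F_{m, k}f)(u)\, \vp(u)\,d_*u=\int_{G} f(g\check v_0)\,\vp(gu_0)\,dg .
\]
The analogous manipulation on the right-hand side, substituting $g\mapsto g\rho^{-1}$ and using $\rho^{-1}v_0=v_0$, yields
\[
\int_{\vnm}f(v)\,(\fd \vp)(v)\, d_*v=\int_{G} f(gv_0)\,\vp(g\check u_0)\,dg .
\]

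It remains to show the two $G$-integrals agree, and this is where the constraint $k+m\le n$ is used. The columns of $\check v_0$ are $e_1,\dots,e_m$ and those of $u_0$ are $e_{n-k+1},\dots,e_n$, while the columns of $v_0$ are $e_{n-m+1},\dots,e_n$ and those of $\check u_0$ are $e_1,\dots,e_k$; since $k+m\le n$, each of these two systems of $m+k$ vectors is orthonormal. Hence one may choose a fixed $\om_0\in O(n)$ with $\om_0\check v_0=v_0$ and $\om_0 u_0=\check u_0$ (prescribe $\om_0$ on these $m+k$ basis vectors and extend to an orthogonal map of $\bbr^n$). Substituting $g\mapsto g\om_0^{-1}$ in $\int_G f(gv_0)\vp(g\check u_0)\,dg=\int_G f(g\om_0\check v_0)\vp(g\om_0 u_0)\,dg$ converts it into $\int_G f(g\check v_0)\vp(gu_0)\,dg$, which is the right-hand side of the first displayed identity. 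This proves (\ref{009a}).

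The only points needing care are routine: the normalization bookkeeping — that $d_*u$, $d_*v$, the measures $d\t$, $d\rho$ on $K_0\cong O(n-k)$, $\check K_0\cong O(n-m)$, and $dg$ on $O(n)$ all have total mass $1$ and are compatible with the quotient maps, matching the identities already recorded in (\ref{876a})–(\ref{876ab}) — and the use of Fubini, for which the stated finiteness of one of the integrals in (\ref{009a}) is exactly what is required. The one genuinely structural ingredient is the existence of the intertwining element $\om_0$, which is immediate from $k+m\le n$. (Equivalently, one may phrase the argument by noting that both sides of (\ref{009a}) equal the integral of $f(v)\vp(u)$ over the incidence manifold $\{(u,v)\in\vnk\times\vnm:\,u'v=0\}$ against its unique $O(n)$-invariant probability measure.)
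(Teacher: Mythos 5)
Your proof is correct. Note that this paper does not actually prove the lemma --- it is quoted verbatim from Part I (\cite[Lemma 3.2]{Ru10}) --- but your argument is the standard one for such duality relations: lift both sides to $O(n)$ via (\ref{876a})--(\ref{876ab}), use Fubini (licensed by the stated absolute-convergence hypothesis) together with right invariance of Haar measure and the fact that $K_0$, $\check K_0$ stabilize $u_0$, $v_0$, and then identify the two resulting $G$-integrals by an intertwining element $\om_0$, whose existence is exactly where $k+m\le n$ enters. All the delicate points are handled: the inner $K_0$-integral is independent of the choice of $g_u$ by invariance of Haar measure on the stabilizer, and the equivalent description via the invariant measure on the incidence manifold is the right way to see why both sides coincide.
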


\subsection{Cosine and Sine transforms}

 When dealing with operators (\ref{0mby}) and (\ref{0mbyd}), we restrict our consideration to the case $m\le k$, because, if $m>k$, then $|v'uu'v|=0$ for all $v\in \vnm$ and all $u\in\vnk$. Similarly,  for (\ref{c0mby}) and (\ref{c0mbyd}), we assume  $m\le n-k$, because, if $m>n-k$, then
\be\label {kopz}|I_m -v'uu'v|=|I_m
-v'\Pr_{\{u\}}v|=|v' \Pr_{\{u\}^\perp } v|=|v' \tilde u \tilde u' v|=0
\ee
(here $\tilde u $ is an arbitrary $(n-k)$-frame  orthogonal to $ \{u\}=u\bbr^k$). The case $k=n$, when $ v'uu'v\equiv I_m$, is also not interesting.
Clearly, \be\label{robp}
(\S^{\a}_{m, k} f)(u)\!=\!(\C^{\a}_{m, n-k} f)(\tilde u)\!=\!\int_{\vnm} \!\!\!f(v)\, |v'\tilde u\tilde
 u'v|^{(\a-(n-k))/2} \, d_*v.\ee

The case $k=m$, when $\C^{\a}_{m, k}$ and $\S^{\a}_{m, k}$  coincide with
their duals, is of particular importance. In this case we denote \be
\label{nkmt}(M^{\a} f)(u)=\int_{\vnm} \!\!\!f(v)\,
|u'v|^{\a-m} \, d_*v,\quad 1\!\le\! m\! \le\! n\!-\!1,\qquad \quad\; \ee
 \be\label{0mbyr}(Q^\a f)(u)=\int_{\vnm}\!\!\!f(v)\,|I_m\!-\!v'
uu' v|^{(\a+m-n)/2}\, d_*v,\quad 2m\!\le \!n, \ee where $u \in V_{n,m}$.

\section{Analytic continuation of  the cosine  transform}

We use the following indirect procedure. Consider  the cosine transform
\be\label{hgh}
(\C^{\a}_{m, k} f)(u)=\int_{\vnm} \!\!\!f(v)\,
|v'uu'v|^{(\a-k)/2} \, d_*v, \qquad u\in V_{n,k},\ee
and set $u=\gam u_0$,  $u_0= \left[\begin{array} {c}  0 \\  I_{k} \end{array} \right]\in\! V_{n,k}$, $\gam \in SO(n)$. Changing variable $v=\gam w$, we obtain
$$
(\C^{\a}_{m, k} f)(\gam u_0)=\int_{\vnm} \!\!\!f_\gam (w)\,
|w'u_0u'_0w|^{(\a-k)/2} \, d_*w,
$$
where $f_\gam (w)=f(\gam w)$. Consider an auxiliary integral
$$
F_\a (\gam)=\int_{\frM_{n,m}} f_\gam (x (x'x)^{-1/2})\, |x'u_0u'_0x|^{(\a-k)/2} \psi (x'x)\,  \exp(-\tr (x'x))\,dx,
$$
where $\psi (r)$ is a nonnegative $C^\infty$ function on the cone $\Om$ having a compact support away from the boundary of $\Om$. Suppose that $f$ belongs to $ C^\infty (\vnm)$. Then the function
 \be\label {j7n9} \vp_\gam (x)\equiv f_\gam (x (x'x)^{-1/2})\, \psi (x'x)\,  \exp(-\tr (x'x))\ee
 belongs to $S  (\frM_{n,m})$ and is supported away from the surface $|x'x|=0$.
 Passing to polar coordinates $x=wr^{1/2}$, $w\in \vnm$, $r\in \Om$ (cf. (\ref{pol})), we obtain
 \be  F_\a (\gam)=\varkappa (\a)\, (\C^{\a}_{m, k} f)(\gam u_0), \ee
 \be\label {as34} \varkappa  (\a)=2^{-m} \sig_{n,m}\int_{\Om} |r|^{(\a-k+n)/2-d} \psi (r)\,  \exp(-\tr (r))\, dr,\ee
  $d=(m+1)/2$.
 Since $\varkappa (\a)$ and its reciprocal are entire functions of $\a$, which differ from zero for any complex $\a$, then  analyticity  of $\C^{\a}_{m, k} f$ is equivalent to that of $F_\a (\gam)$ and possible poles of both functions are at the same points and of the same orders.

The function  $F_\a (\gam)$ can be represented as a zeta integral, namely,
\bea  F_\a (\gam)&=&\int_{\frM_{n,m}}\vp_\gam (x) \, |x'u_0u'_0x|^{(\a-k)/2}\,dx\nonumber\\\label {pz67}&=&
\int_{\frM_{k,m}}|y|_m^{\a-k}\,\tilde\vp_\gam (y)\,dy= \Z(\tilde\vp_\gam, \a-k),\eea
where the function \be \label {45d} \tilde\vp_\gam (y)=\int_{\frM_{n-k,m}}
\vp_\gam  \left(\left[\begin{array} {c}  \eta \\  y \end{array} \right]\right)\, d \eta \ee
belongs to  $S  (\frM_{k,m})$;  cf. (\ref{zeta}).  Thus,
\be\label {lkjn}
(\C^{\a}_{m, k} f)(\gam u_0)=\varkappa (\a)^{-1}\,\Z(\tilde\vp_\gam, \a-k).
\ee

\begin{theorem}\label{lhgn}
Let  $1\le m\le k\le n-1$, $f\in C^\infty (\vnm)$.

\noindent {\rm (i)} If $Re\, \a > m-1$,  then the cosine transform $ (\C^{\a}_{m, k} f)(u)$ is represented by an
 absolutely convergent integral.

\noindent {\rm (ii)} For every $u \!\in\! V_{n,k}$, the function $ \a \to (\C^{\a}_{m, k} f)(u)$ extends to the domain
 $Re\, \a \leq m-1$
as a meromorphic function  with the only poles $ \;
m-1, m-2,\dots\;$. These poles and their orders are exactly the
same as of the gamma function $\gm(\a/2)$.

\noindent {\rm (iii)}  The meromorphic continuation  of $(\C^{\a}_{m, k} f)(u)$ can be represented in the  form
\be\label {oaoc}
(\C^{\a}_{m, k} f)(\gam u_0)=\frac{1}{\varkappa (\a)\, B_{\ell,m,k}(\a)}\, \Z(\Del ^\ell \tilde\vp_\gam,\a+2\ell-k),\ee $$
 Re \,\a > m-1-2\ell,\qquad  \ell=1,2, \ldots, \, ,$$
where $B_{\ell,m,k}(\a)$ is the Bernstein  polynomial
 \be\label{bkak}
B_{\ell,m,k}(\a)=\prod\limits_{i=0}^{m-1}\prod\limits_{j=0}^{\ell-1}(\a-i+2j)(\a-k+2+2j+i)\ee
 and $\tilde\vp_\gam$ is defined by (\ref{45d})-(\ref{j7n9}).

\noindent {\rm (iv)} The normalized
integral $ (\C^{\a}_{m, k} f)(u)/\gm(\a/2)$
 is an entire
function of $\a$ and belongs to $ C^\infty (\vnk)$ in the $u$-variable.

\noindent {\rm (v)} If, moreover, $ k+m\le n$, then
  \be\label{zn0x} \underset
{\a=0}{a.c.} \,\frac{(\C^{\a}_{m, k} f)(u)}{\gm(\a/2)}=c_{k,m}\,(F_{m, k}f) (u), \ee
where $$ c_{k,m}=\frac{\gm(n/2)}{\gm(k/2)\, \gm((n-k)/2)},$$
and $(F_{m, k}f) (u)$ is the Funk transform (\ref{876a}).
\end{theorem}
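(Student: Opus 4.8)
The plan is to establish the five assertions essentially by transferring the corresponding facts about zeta integrals (Lemmas \ref{lacz} and \ref{tzk}) through the identity (\ref{lkjn}), which expresses $(\C^{\a}_{m,k}f)(\gam u_0)$ as $\varkappa(\a)^{-1}\Z(\tilde\vp_\gam,\a-k)$. First I would verify the preparatory claims that are already sketched in the text preceding the theorem: that for $f\in C^\infty(\vnm)$ the function $\vp_\gam$ of (\ref{j7n9}) lies in $S(\frM_{n,m})$ and is supported away from $\{|x'x|=0\}$ (this uses that $x(x'x)^{-1/2}$ is a smooth $\vnm$-valued map on the rank-$m$ locus and that the cutoff $\psi$ kills a neighborhood of the boundary of $\Om$), and consequently that the partial integral $\tilde\vp_\gam$ of (\ref{45d}) lies in $S(\frM_{k,m})$; here one needs $k\le n$ so that $\frM_{k,m}$ makes sense, and $m\le k$ so that the zeta integral $\Z(\tilde\vp_\gam,\a-k)$ over $\frM_{k,m}$ falls under Lemma \ref{lacz} with the roles of $n$ and $m$ there played by $k$ and $m$. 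For (i), convergence for $Re\,\a>m-1$ is just the "if" direction of Lemma \ref{lacz} applied to $\Z(\tilde\vp_\gam,\a-k)$, combined with the fact that $\varkappa(\a)$ in (\ref{as34}) is a nonvanishing entire function (an elementary Laplace-type integral over $\Om$ against a compactly supported smooth density). For (ii), the meromorphic continuation and the location and orders of the poles $m-1,m-2,\dots$ (matching those of $\gm(\a/2)$) come verbatim from Lemma \ref{lacz} for $\Z(\tilde\vp_\gam,\cdot)$, since dividing by the zero-free entire $\varkappa(\a)$ changes neither the poles nor their orders.

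For (iii), I would invoke the Bernstein-type identity (\ref{vaz})--(\ref{bka}) with $m,n$ replaced by $m,k$, giving $\Del^\ell|y|_m^{\a+2\ell-k}=B_{\ell,m,k}(\a)\,|y|_m^{\a-k}$ on $\frM_{k,m}$ with $B_{\ell,m,k}$ as in (\ref{bkak}); integration by parts in the zeta integral then yields $\Z(\tilde\vp_\gam,\a-k)=B_{\ell,m,k}(\a)^{-1}\Z(\Del^\ell\tilde\vp_\gam,\a+2\ell-k)$ for $Re\,\a>m-1-2\ell$, exactly as in (\ref{oao}), and substituting into (\ref{lkjn}) gives (\ref{oaoc}). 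For (iv), the entirety of $(\C^{\a}_{m,k}f)(u)/\gm(\a/2)$ follows from the last sentence of Lemma \ref{lacz}: $\Z(\tilde\vp_\gam,\a-k)/\gm(\a/2)$ is entire, and $\varkappa(\a)^{-1}$ is entire. The $C^\infty$-dependence in $u$ I would handle by writing $u=\gam u_0$ and noting that $\gam\mapsto f_\gam$ is smooth into $C^\infty(\vnm)$, hence $\gam\mapsto\vp_\gam$ is smooth into $S(\frM_{n,m})$ and $\gam\mapsto\tilde\vp_\gam$ smooth into $S(\frM_{k,m})$; since the (normalized, $a.c.$) zeta integral depends continuously — indeed smoothly — on its Schwartz argument in the relevant seminorms, and since for fixed $u$ the value is independent of the choice of $\gam$ with $\gam u_0=u$, one gets a well-defined smooth function on $\vnk$. (One should remark that for $Re\,\a>m-1$ this is immediate from differentiating under the integral in (\ref{hgh}), and the general case follows by the representation (\ref{oaoc}), whose right-hand side is manifestly smooth in $\gam$.)

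The main work is (v). I would start from the normalized form of (\ref{lkjn}),
\[
\underset{\a=0}{a.c.}\,\frac{(\C^{\a}_{m,k}f)(\gam u_0)}{\gm(\a/2)}
=\varkappa(0)^{-1}\,\underset{\a=0}{a.c.}\,\frac{\Z(\tilde\vp_\gam,\a-k)}{\gm(\a/2)},
\]
and apply Lemma \ref{tzk} on $\frM_{k,m}$ (with $n$ there equal to $k$), which gives $\underset{\a=0}{a.c.}\,\Z(\tilde\vp_\gam,\a-k)/\gm(\a/2)=\pi^{km/2}\gm(k/2)^{-1}\tilde\vp_\gam(0)$. By (\ref{45d}) and (\ref{j7n9}),
\[
\tilde\vp_\gam(0)=\int_{\frM_{n-k,m}}\vp_\gam\!\left(\left[\begin{array}{c}\eta\\0\end{array}\right]\right)d\eta
=\int_{\frM_{n-k,m}}f_\gam\!\left(\left[\begin{array}{c}\eta\\0\end{array}\right](\eta'\eta)^{-1/2}\right)\psi(\eta'\eta)\,e^{-\tr(\eta'\eta)}\,d\eta,
\]
and passing to polar coordinates on $\frM_{n-k,m}$ via Lemma \ref{l2.3} (legitimate since $m\le n-k$, which is where the hypothesis $k+m\le n$ is used) turns this into a product of $2^{-m}\int_\Om|r|^{(n-k-m-1)/2}\psi(r)e^{-\tr r}\,dr$ times $\sig_{n-k,m}\int_{V_{n-k,m}}f_\gam\!\left(\left[\begin{smallmatrix}\om\\0\end{smallmatrix}\right]\right)d_*\om$; the latter integral is precisely $(F_{m,k}f)(\gam u_0)$ by the first equality in (\ref{876a}). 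Assembling the constants — $\varkappa(0)$ from (\ref{as34}) contains $2^{-m}\sig_{n,m}\int_\Om|r|^{(n-k-m+1)/2-d}\psi(r)e^{-\tr r}\,dr$, which I would want to match with the $r$-integral coming from $\tilde\vp_\gam(0)$; note $(\a-k+n)/2-d$ at $\a=0$ equals $(n-k-m-1)/2$, so the two $\psi$-dependent $r$-integrals are literally the same, and the arbitrary cutoff $\psi$ cancels completely — one is left with $\pi^{km/2}\gm(k/2)^{-1}\cdot\sig_{n-k,m}/\sig_{n,m}$ times $(F_{m,k}f)(\gam u_0)$. Using (\ref{2.16}) to write $\sig_{n-k,m}/\sig_{n,m}=\pi^{-km/2}\gm(n/2)^{-1}\gm((n-k)/2)\cdot(\text{ratio of }2\text{-powers that cancels})$ — more precisely $\sig_{n,m}=2^m\pi^{nm/2}/\gm(n/2)$ and $\sig_{n-k,m}=2^m\pi^{(n-k)m/2}/\gm((n-k)/2)$, so the ratio is $\pi^{-km/2}\gm(n/2)/\gm((n-k)/2)$ — the net constant is $\gm(n/2)/(\gm(k/2)\gm((n-k)/2))=c_{k,m}$, which is (\ref{zn0x}). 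The delicate point throughout is bookkeeping: one must check that the cutoff-dependent factors $\int_\Om|r|^{\bullet}\psi e^{-\tr r}dr$ on the two sides are the same function of $\psi$ so that they cancel (they do, because the exponent $(n-k+\a)/2-d$ at $\a=0$ coincides with the exponent $(n-k-m-1)/2$ produced by the polar decomposition on $\frM_{n-k,m}$), and that all the spurious powers of $2$ and $\pi$ collapse as claimed; I expect this constant-tracking, rather than any analytic subtlety, to be the only real obstacle.
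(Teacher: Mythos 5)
Your proposal is correct and follows essentially the same route as the paper: parts (i)--(iv) are read off from the zeta-integral representation $(\C^{\a}_{m,k}f)(\gam u_0)=\varkappa(\a)^{-1}\Z(\tilde\vp_\gam,\a-k)$ via Lemma \ref{lacz}, the Bernstein identity, and smoothness of $\Del^\ell\tilde\vp_\gam$ in $\gam$, while part (v) is obtained exactly as in the paper by applying Lemma \ref{tzk} on $\frM_{k,m}$, evaluating $\tilde\vp_\gam(0)$ in polar coordinates on $\frM_{n-k,m}$ (where $k+m\le n$ enters), and cancelling the $\psi$-dependent integrals. The only blemish is a harmless typo in one intermediate exponent ($(n-k-m+1)/2-d$ should be $(n-k)/2-d=(n-k-m-1)/2$), which you immediately correct, so the constant $c_{k,m}$ comes out right.
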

\begin{proof}
Statements {\rm (i)} - {\rm (iv)} are  immediate consequences of  Lemma \ref{lacz} and equality (\ref{oao}).  In particular, smoothness of the analytic continuation \be\label {m2z3} a.c. \,(\C^{\a}_{m, k} f)(u)/\gm(\a/2)\ee in the $u$-variable follows from (\ref{oaoc}), because $\Del ^\ell \tilde\vp_\gam$ is a smooth function of   $\gam \in SO(n)$. To prove {\rm (v)},  by (\ref{lkjn}) we have
\be\label {9t7}
\underset
{\a=0}{a.c.} \,\frac{(\C^{\a}_{m, k} f)(u)}{\gm(\a/2)}=\underset
{\a=0}{a.c.} \,\frac{1}{\varkappa (\a)}\, \frac{\Z(\tilde\vp_\gam, \a-k)}{\gm(\a/2)}.\ee
Hence, by (\ref{1pqpx}), (\ref{45d}), and (\ref{j7n9}),
\bea &&\underset
{\a=0}{a.c.} \,\frac{(\C^{\a}_{m, k} f)(u)}{\gm(\a/2)}=\frac{\pi^{km/2}}{\varkappa (0)\,\gm(k/2)}\, \tilde\vp_\gam (0)\nonumber\\
&&{}\nonumber\\&&=\!\frac{\pi^{km/2}}{\varkappa (0)\,\gm(k/2)}\intl_{\frM_{n-k,m}} \!\!\!\!f_\gam \left( \left[\begin{array} {c}  \eta \\  0 \end{array} \right] \,(\eta'\eta)^{-1/2}\right)\,
 \psi (\eta'\eta)\,  \exp(-\tr (\eta'\eta))\,d\eta. \nonumber\eea
Since $n-k\ge m$, we can use Lemma \ref {l2.3} to pass to polar coordinates and get an expression of the form $c\,I_1 \,I_2$, where
$$
I_1\!=\!\intl_{V_{n-k,m}}\!\!\! f_\gam \left( \left[\begin{array} {c}  \om \\  0 \end{array} \right]\right)\,d_*\om,\qquad I_2\!=\!\intl_{\Om} \! |r|^{(n-k)/2-d} \psi (r)\,  \exp(-\tr (r))\, dr,
$$
$$
c=\frac{\pi^{km/2}\, \sigma_{n-k,m}}{\sigma_{n,m}\,\gm(k/2) \,I_2}=\frac{\gm(n/2)}{\gm(k/2)\, \gm((n-k)/2)}\,\frac{1}{I_2}
$$
(use (\ref{2.16})). Integral $I_1$ is exactly the Funk transform (\ref{876a}). This proves (\ref{zn0x}).
\end{proof}

Some comments are in order.

\noindent {\bf 1.} Equality (\ref{zn0x})  was obtained in \cite{Ru10} in the weak sense, using the Fourier transform technique; cf.  formula (6.9) in that paper.  Our new proof  is much simpler and more informative.

\noindent {\bf 2.} The particular case $k=m$ in Theorem \ref{lhgn} characterizes analytic properties of the operator $M^{\a}f$ defined by  (\ref{nkmt}). For example,  (\ref{zn0x}) yields
  \be\label{zn0xq} \underset
{\a=0}{a.c.} \,\frac{(M^{\a}f)(u)}{\gm(\a/2)}=c_{m,m}\,(F_{m}f) (u), \ee
where $$ c_{m,m}=\frac{\gm(n/2)}{\gm(m/2)\, \gm((n-m)/2)}, \qquad 2m\le n,$$
and
 \be \label {la3vq}(F_{m} f) (u)=\int_{\{v\in V_{n, m}: \, u'v=0\}} f(v)\,d_u v,
\qquad u\!\in\! V_{n,m},\ee
is the Funk transform.

\noindent {\bf 3.} The particular case $f\equiv 1$, when
\be\label{mnvb} (\C^{\a}_{m, k} 1)(u)\!\equiv \!
 \intl_{V_{n, m}}\!\!\!|v'uu'v|^{(\a -k)/2} \,
dv\!=\!\frac{\Gam_m (n/2)}{\Gam_m (k/2)}\, \frac{\Gam_m (\a/{2})}{\Gam_m ((\a\! -\!k\!+\!n)/{2})} \ee
 (cf. (8.16) in \cite{Ru10}) illustrates  the role of the gamma function $\Gam_m (\a/{2})$ in the statement (ii) of Theorem \ref{lhgn}.

\noindent {\bf 4.}  It is known \cite {OR04, Ru06} that analytic continuation of the zeta integral (\ref{zeta}) at the so-called {\it Wallah set} $\a= 0,1,2, \ldots m-1$ is represented as a convolution with a positive measure, which can be explicitly evaluated.  We conjecture, that a similar result holds for the normalized cosine transform $(\C^{\a}_{m, k} f)(u)/\gm(\a/2)$ and the corresponding measure can be explicitly evaluated in the form, which does not contain the auxiliary function $\psi$ (so far, we succeeded in doing that only for $\a=0$).

\section{Analytic continuation of  the sine transform}\label {64g5}

The sine transform of a function $f(v)$ on $\vnm$ is a function $(\S^{\a}_{m, k} f)(u)$   defined by
\be\label{c0mbyq}(\S^{\a}_{m, k} f)(u)=\int_{\vnm} \!\!\!f(v)\, |I_m
-v'uu'v|^{(\a+k-n)/2} \, d_*v, \qquad u \!\in\! V_{n,k}.\ee
We assume $k+m \le n$, because otherwise $|I_m
-v'uu'v|=0$  for all $v\in \vnm$ and  $u\in\vnk$; see (\ref{kopz}).

\begin{theorem}\label{lhgns} Let  $1\le k,m \le n-1$, $k+m\le n$, $f\in C^\infty (\vnm)$.

\noindent {\rm (i)} If $Re\, \a > m-1$,  then the sine transform $(\S^{\a}_{m, k} f)(u)$  is represented by an
 absolutely convergent integral.

\noindent {\rm (ii)} For every $u \!\in\! V_{n,k}$, the function $ \a \to (\S^{\a}_{m, k} f)(u)$ extends to the domain
 $Re\, \a \leq m-1$
as a meromorphic function  with the only poles $ \;
m-1, m-2,\dots\;$. These poles and their orders are exactly the
same as of the gamma function $\gm(\a/2)$.

\noindent {\rm (iii)}  The normalized
integral $ (\S^{\a}_{m, k} f)(u)/\gm(\a/2)$
 is an entire
function of $\a$ and belongs to $ C^\infty (\vnk)$ in the $u$-variable.

\noindent {\rm (iv)} In the case $k=m$, $2m\le n$, when  $\S^{\a}_{m, k} f\equiv Q^{\a} f$ is the integral (\ref{0mbyr}),  we have
\be\label{zn0q} \underset
{\a=0}{a.c.} \,\frac{(Q^{\a} f)(u)}{\gm(\a/2)}=\frac{\gm(n/2)}{\gm(m/2)\, \gm((n-m)/2)}\,f (u) \ee
provided that $f$ is right $O(m)$-invariant.
\end{theorem}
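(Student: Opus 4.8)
\textbf{Proof plan for Theorem \ref{lhgns}.}
The strategy is to reduce the sine transform to the cosine transform already analyzed in Theorem \ref{lhgn} and then invoke that theorem directly. The key observation is formula (\ref{robp}): if $\tilde u$ is an $(n-k)$-frame orthogonal to $\{u\}=u\bbr^k$, then $(\S^{\a}_{m, k} f)(u)=(\C^{\a}_{m, n-k} f)(\tilde u)$. Since $k+m\le n$ is equivalent to $m\le n-k$, the hypotheses of Theorem \ref{lhgn} are met with $k$ replaced by $n-k$. Thus statements (i), (ii) of the present theorem follow at once: the convergence range $Re\,\a>m-1$ and the polar set $\{m-1,m-2,\ldots\}$ with orders matching $\gm(\a/2)$ are exactly what Theorem \ref{lhgn}(i)-(ii) asserts for $\C^{\a}_{m, n-k}f$, and these properties are invariant under the substitution $u\mapsto\tilde u$ (for each fixed $u$ one chooses any admissible $\tilde u$; the value does not depend on the choice because $|I_m-v'uu'v|$ depends on $u$ only through $\{u\}$). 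Statement (iii) likewise follows from Theorem \ref{lhgn}(iv): the normalized integral is entire in $\a$, and $C^\infty$ in the $u$-variable, since $u\mapsto\tilde u$ can be realized smoothly (locally) — more cleanly, one may repeat the indirect zeta-integral argument of Section 3 verbatim with $u_0$ replaced by $\check u_0=\left[\begin{array}{c} I_k\\0\end{array}\right]$, so that $I_m-v'uu'v$ becomes $I_m-w'\check u_0\check u_0'w=w'\,\hat u_0\hat u_0'\,w$ for the complementary frame, giving again a zeta integral $\Z(\tilde\psi_\gam,\a+k-n+m\cdot\text{stuff})$ — I would phrase it directly through (\ref{robp}) to avoid rederiving (\ref{lkjn}).

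For statement (iv), set $k=m$, $2m\le n$, so $\S^{\a}_{m,m}f\equiv Q^{\a}f$ and $n-k=n-m\ge m$. By (\ref{robp}), $(Q^{\a}f)(u)=(\C^{\a}_{m,n-m}f)(\tilde u)$ where $\tilde u\in V_{n,n-m}$ spans $\{u\}^\perp$. Now apply Theorem \ref{lhgn}(v) with the pair $(m,k)$ there equal to $(m,n-m)$: the admissibility condition ``$k+m\le n$'' of that statement becomes ``$(n-m)+m\le n$'', i.e. $n\le n$, which holds with equality — this is the boundary case and I should check that (\ref{zn0x}) still applies there (the proof of Theorem \ref{lhgn}(v) uses $n-k\ge m$, which here reads $m\ge m$, again boundary-permissible since Lemma \ref{l2.3} only needs $n-k\ge m$, not strict). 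We obtain
\be\label{qac}
\underset{\a=0}{a.c.}\,\frac{(\C^{\a}_{m,n-m}f)(\tilde u)}{\gm(\a/2)}
= \frac{\gm(n/2)}{\gm((n-m)/2)\,\gm(m/2)}\,(F_{m,n-m}f)(\tilde u).
\ee
It remains to identify $(F_{m,n-m}f)(\tilde u)$ with $f(u)$ when $f$ is right $O(m)$-invariant. By definition (\ref{la3v}), $(F_{m,n-m}f)(\tilde u)=\int_{\{v\in V_{n,m}:\,\tilde u'v=0\}}f(v)\,d_{\tilde u}v$; but $\tilde u'v=0$ forces $v\bbr^m\subseteq(\{u\}^\perp)^\perp=\{u\}=u\bbr^m$, hence $v\bbr^m=u\bbr^m$ (equal dimensions), i.e. $v=u\om$ for some $\om\in O(m)$. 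The integration set is thus a single $O(m)$-orbit, $d_{\tilde u}v$ is the probability measure on it, and right $O(m)$-invariance of $f$ gives $f(v)=f(u)$ throughout, so the integral equals $f(u)$. Substituting into (\ref{qac}) yields (\ref{zn0q}).

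\textbf{Main obstacle.} The only delicate point is the boundary case $k+m=n$ in the reduction for part (iv): Theorem \ref{lhgn}(v) is stated under $k+m\le n$, and here the relevant cosine transform sits exactly at $(n-m)+m=n$. I would verify that nothing in the proof of Theorem \ref{lhgn}(v) actually requires the strict inequality — it uses Lemma \ref{l2.3} (valid for $n-k\ge m$, here $m\ge m$) and Lemma \ref{tzk} — and also confirm that the Funk transform $F_{m,n-m}$ is well-defined in this boundary case (the orthogonality constraint cuts out a nonempty set, namely a single $O(m)$-orbit, precisely because $m+(n-m)=n$). A secondary, purely cosmetic issue is the choice of $\tilde u$: since $|I_m-v'uu'v|$ depends on $u$ only via the subspace $\{u\}$, the identity (\ref{robp}) and hence all conclusions are independent of which complementary frame is picked, and smoothness in $u$ is inherited from Theorem \ref{lhgn}(iv) via the (local) smooth dependence of $\tilde u$ on $u$, or more safely by rerunning the Section 3 argument with $u_0$ replaced by $\check u_0$.
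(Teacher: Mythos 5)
Your proposal is correct and follows essentially the same route as the paper: statements (i)--(iii) are obtained by rewriting $|I_m-v'uu'v|$ as $|v'\tilde u\tilde u'v|$ so that $\S^{\a}_{m,k}f$ becomes $\C^{\a}_{m,n-k}f$ evaluated at a complementary frame, and (iv) is the boundary case $k+m=n$ of the $\a=0$ evaluation, where the integral over $\{v:\tilde u'v=0\}$ degenerates to an average over a single $O(m)$-orbit and equals $f(u)$ by right $O(m)$-invariance. Your explicit verification that the proof of Theorem \ref{lhgn}(v) survives the boundary case $n-k=m$ is exactly the point the paper handles implicitly via its computation of $\tilde I_1$ as an integral over $V_{m,m}=O(m)$.
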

\begin{proof} We have
\[|I_m -v'uu'v|=|I_m
-v'\Pr_{\{u\}}v|=|v' \Pr_{\{u\}^\perp } v|=|v' \tilde u \tilde u' v|=0,
\]
where $\tilde u $ is an arbitrary $(n-k)$-frame  orthogonal to $ \{u\}=u\bbr^k$. Hence,
\be\label{c0mbt}(\S^{\a}_{m, k} f)(u)=\int_{\vnm} \!\!\!f(v)\, |v' \tilde u \tilde u' v|^{(\a+k-n)/2} \, d_*v\!=\!(\C^{\a}_{m, n-k} f)(\tilde u).
\ee
It remains to apply Theorem \ref{lhgn}.  To prove (\ref{zn0q}), we make use of (\ref{robp}) with $k=m$. Then, as above,
$$
\underset{\a=0}{a.c.} \,\frac{(Q^{\a} f)(u))}{\gm(\a/2)}=\underset
{\a=0}{a.c.} \,\frac{(\C^{\a}_{m, n-m} f)(\tilde u)}{\gm(\a/2)}=\tilde c\,\tilde I_1 \,\tilde I_2,$$ where
$$
\tilde I_1=\intl_{V_{m,m}}\!\!\! f_\gam \left( \left[\begin{array} {c}  \om \\  0 \end{array} \right]\right)\,d_*\om=\intl_{O(m)}\!\!\! f \left(  \gam \left[\begin{array} {c} I_m \\  0 \end{array} \right] \,\om\right)\,d\om=
f \left( \gam\left[\begin{array} {c}  I_m\\  0 \end{array} \right]\right),
$$
$$
I_2\!=\!\intl_{\Om} \! |r|^{m/2-d} \psi (r)\,  \exp(-\tr (r))\, dr, \quad \tilde c=\frac{\gm(n/2)}{\gm(m/2)\, \gm((n-m)/2)}\,\frac{1}{\tilde I_2}.$$
Here $\gam$ is a rotation that takes $\left[\begin{array} {c}  0\\  I_{n-m} \end{array} \right]$ to $\tilde u \in \{u\}^\perp$. Hence, $\gam$  takes $\left[\begin{array} {c}  I_m\\  0 \end{array} \right]$ to a certain frame, say,   $u_1$, that spans the same subspace as $u$.  Since $f$ is right $O(m)$-invariant, then $\tilde I_1\!=\!f(u)$, and we are done.
\end{proof}

\section{Analytic continuation of  the dual cosine  transform}
The dual cosine  transform of a function $\vp$ on $\vnk$ is    defined by
\be\label{ydcs}(\cd0
\vp)(v)\!=\!\int_{\vnk} \!\!\vp(u)\, |v'uu'v|^{(\a-k)/2} \, d_*u, \qquad v \!\in \!\vnm, \quad m\!\le \!k.\ee
We will be dealing with smooth functions $\vp\in C^\infty (\vnk)$. Moreover, since $\vp(u)$  and $\vp(u\gam)$ $\forall \gam \in O(k)$ have the same dual cosine  transform,  we  can assume that $\vp$ is right $O(k)$-invariant.
We denote by $\tilde u \in V_{n, n-k}$ and $\tilde v\in V_{n, n-m} $  arbitrary frames  orthogonal to subspaces $ \{u\}$ and $ \{v\}$, respectively. Then
\bea
|v'uu'v|&=&|I_m-v'\tilde u \tilde u'v|=|I_{n-k}-\tilde u'vv'\tilde u |=|I_{n-k}-\tilde u'\Pr_{\{v\}} \tilde u |\nonumber\\
&=&|\tilde u'\Pr_{\{v\}^\perp }  \tilde u |=|\tilde u'\tilde v \tilde v'  \tilde u |. \nonumber\eea
Setting $\vp_1(\tilde u) = \vp(u)$, we obtain
\bea\label {k580}(\cd0
\vp)(v)&=&\int_{ V_{n, n-k}} \vp_1(\tilde u) \, |\tilde u'\tilde v \tilde v'  \tilde u |^{(\a-k)/2} \, d_*\tilde u\\&=&
(\C^{\a+n-k-m}_{n-k, n-m}  \vp_1)(\tilde v ), \qquad \tilde v \in V_{n, n-m}.\nonumber\eea
Thus,  analytic properties of $\cd0
\vp$ can be derived from Theorem \ref{lhgn}.

\begin{theorem}\label{lhdu}
Let  $1\le m\le k\le n-1$ and let  $\vp$ be a right $O(k)$-invariant function in $C^\infty (\vnk)$.

\noindent {\rm (i)} If $Re\, \a > m-1$,  then  dual cosine transform $(\cd0 \vp)(v)$ is represented by an
 absolutely convergent integral.

\noindent {\rm (ii)} For every $v \!\in\! V_{n,m}$, the function $ \a \to (\cd0 \vp)(v))$ extends to the domain
 $Re\, \a \leq m-1$
as a meromorphic function  with the only poles $ \;
m-1, m-2,\dots\;$. These poles and their orders are exactly the
same as of the gamma function $\Gam_{n-k}((\a+n-k-m)/2)$.

\noindent {\rm (iii)}  The normalized
integral $ (\cd0 \vp)(v)/\Gam_{n-k}((\a+n-k-m)/2)$
 is an entire
function of $\a$ and belongs to $ C^\infty (\vnm)$ in the $v$-variable.

\noindent {\rm (iv)} If $ k+m\le n$, then $ (\cd0 \vp)(v)/\gm(\a/2)$ extends
as a meromorphic function  with the only possible poles $ \;
-1, -2,\dots\;$. Moreover, for every $v\in\vnm$,
  \be\label{zn0xdr} \underset
{\a=0}{a.c.} \,\frac{ (\cd0 \vp)(v)}{\gm(\a/2)}=c_{k,m}\,(\fd \vp) (v), \ee
where
$$ c_{k,m}=\frac{\gm(n/2)}{\gm(k/2)\, \gm((n-k)/2)}$$
and $(\fd \vp) (v)$ is the dual Funk transform (\ref{la3vd}).

\noindent {\rm (v)} If $ k+m> n$, then for every $v\in\vnm$,
  \be\label{zn0xd} \underset
{\a=k+m-n}{a.c.} \,\frac{ (\cd0 \vp)(v)}{\Gam_{n-k}((\a+n-k-m)/2)}=
 \tilde c_{k,m}\,(F_{n-k, n-m}  \vp_1)(\tilde v ), \ee
where
$$ \tilde c_{k,m}=\frac{\Gam_{n-k}(n/2)}{\Gam_{n-k}(m/2)\,\Gam_{n-k}((n-m)/2)}$$
and $(F_{n-k, n-m}  \vp_1)(\tilde v )$ is the relevant Funk transform; cf. (\ref{876a}).
\end{theorem}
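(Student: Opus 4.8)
The plan is to read off all five statements from Theorem~\ref{lhgn} by means of the identity (\ref{k580}),
$(\cd0\vp)(v)=(\C^{\a+n-k-m}_{n-k,\,n-m}\vp_1)(\tilde v)$, $\tilde v\in V_{n,n-m}$, where $\vp_1(\tilde u)=\vp(u)$ is the smooth right $O(n-k)$-invariant function on $V_{n,n-k}$ attached to $\vp$ via the identifications of Section~2. Put $m'=n-k$, $k'=n-m$, $\b=\a+n-k-m$; since $1\le m\le k\le n-1$ we get $1\le m'\le k'\le n-1$, so Theorem~\ref{lhgn} applies to $\C^{\b}_{m',k'}\vp_1$, and its parts (i)--(iv), rewritten in the variable $\a$, give parts (i)--(iii) here at once: absolute convergence for $Re\,\b>m'-1$, i.e.\ $Re\,\a>m-1$; meromorphic continuation with poles at $\b=m'-1,m'-2,\dots$, i.e.\ $\a=m-1,m-2,\dots$, of the same orders as $\Gam_{m'}(\b/2)=\Gam_{n-k}((\a+n-k-m)/2)$; and entirety of the normalized transform. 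The only extra remark is that Theorem~\ref{lhgn} supplies smoothness in the frame $\tilde v$, whereas (iii) wants smoothness in $v$; this is harmless because the right side of (\ref{k580}) depends on $v$ only through $\{v\}^\perp=\{\tilde v\}$, the map $\{v\}\mapsto\{v\}^\perp$ is a diffeomorphism $\gnm\to G_{n,n-m}$, and each Grassmannian has local smooth sections into its Stiefel manifold, so $v\mapsto\tilde v$ may be chosen locally smooth.

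For (iv) assume $k+m\le n$, hence $n-k-m\ge0$. Writing both Siegel gamma functions out via (\ref{2.4}) and cancelling common factors gives $\Gam_{n-k}((\a+n-k-m)/2)/\gm(\a/2)=\pi^{c}\,\prod_{j=1}^{n-k-m}\Gamma((\a+j)/2)$, $c=[(n-k)(n-k-1)-m(m-1)]/4$, a function holomorphic off $\{-1,-2,\dots\}$ and nonvanishing at $\a=0$; multiplying the entire function $(\cd0\vp)(v)/\Gam_{n-k}((\a+n-k-m)/2)$ of (iii) by it proves the pole assertion in (iv). For the value at $\a=0$ I would \emph{not} appeal to Theorem~\ref{lhgn}(v) on the right of (\ref{k580})---there the spectral parameter at $\a=0$ is $\b=n-k-m$, generically a positive integer, not $0$---but instead use the bilinear pairing $\int_{\vnk}(\C^{\a}_{m,k}f)(u)\,\vp(u)\,d_*u=\int_{\vnm}f(v)\,(\cd0\vp)(v)\,d_*v$ (Fubini, legitimate for $Re\,\a>m-1$, the inner integral being bounded by (\ref{mnvb})), divide by $\gm(\a/2)$, and let $\a\to0$. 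On the left, $k+m\le n$ now permits Theorem~\ref{lhgn}(v), and then the Funk duality (\ref{009a}) gives $c_{k,m}\int_{\vnm}f(v)\,(\fd\vp)(v)\,d_*v$; on the right the limit passes under the integral because $(\cd0\vp)(v)/\gm(\a/2)$ is jointly continuous in $(\a,v)$ near $\a=0$ (from the factorization above together with the Bernstein-type representation (\ref{oaoc}) of $\C^{\b}_{m',k'}\vp_1$ near $\b=n-k-m$) and $\vnm$ is compact. As $f\in C^\infty(\vnm)$ is arbitrary and both $\underset{\a=0}{a.c.}\,(\cd0\vp)(v)/\gm(\a/2)$ and $(\fd\vp)(v)$ are continuous in $v$, (\ref{zn0xdr}) follows pointwise.

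For (v) assume $k+m>n$; then $k'+m'=2n-k-m\le n$, so Theorem~\ref{lhgn}(v) \emph{does} apply to $\C^{\b}_{m',k'}\vp_1$ at $\b=0$, i.e.\ at $\a=k+m-n$, with normalizing factor $\Gam_{m'}(\b/2)=\Gam_{n-k}((\a+n-k-m)/2)$. Since $c_{k',m'}=\tilde c_{k,m}$ and $F_{m',k'}\vp_1=F_{n-k,n-m}\vp_1$, this is precisely (\ref{zn0xd}).

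The main obstacle is part~(iv): the substitution $\b=\a+n-k-m$ carries $\a=0$ to $\b=n-k-m$, which is generically nonzero, so, unlike in (v), one cannot invoke Theorem~\ref{lhgn}(v) directly on the dual transform; the remedy is the detour through the duality pairing with the ordinary cosine transform, where $\a=0$ is exactly the case covered by Theorem~\ref{lhgn}(v), combined with the Funk duality (\ref{009a}). The accompanying technical point---interchanging $\lim_{\a\to0}$ with $\int_{\vnm}$---is routine once the normalized dual transform is known to be jointly continuous near $\a=0$, which the Bernstein-type representation and the gamma-factor computation deliver.
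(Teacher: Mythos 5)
Your proposal is correct and follows essentially the same route as the paper: parts (i)--(iii) and (v) via the identity (\ref{k580}) and Theorem \ref{lhgn} with $m'=n-k$, $k'=n-m$, $\b=\a+n-k-m$, and part (iv) via the factorization $\Gam_{n-k}((\a+n-k-m)/2)=c(\a)\,\gm(\a/2)$ (your $\pi^{c}\prod_{j=1}^{n-k-m}\Gamma((\a+j)/2)$ is the same function) combined with the duality pairing against the cosine transform, Theorem \ref{lhgn}(v), and the Funk duality (\ref{009a}). Your explicit remarks on the smoothness transfer from $\tilde v$ to $v$ and on the justification for interchanging the limit with the integral are welcome details that the paper leaves implicit, but they do not change the argument.
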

\begin{proof} Owing to (\ref{k580}), statements {\rm (i)}-{\rm (iii)} and {\rm (v)} are immediate consequences of the respective statements in Theorem \ref{lhgn}.
To prove {\rm (iv)}, we observe that by (\ref{2.4}),
\be \Gam_{n-k}((\a+n-k-m)/2)= c (\a)\, \gm(\a/2), \ee
where $c (\a)= \pi^{(n-k-m)m/2}\Gam_{n-k-m}((\a+n-k-m)/2)$ is a meromorphic function with the polar set $ \{
-1, -2,\dots \}$. Denote
$$
A_\a(v)\equiv\frac{ (\cd0 \vp)(v)}{\gm(\a/2)}=\frac{c (\a)\, (\cd0 \vp)(v)}{\Gam_{n-k}((\a+n-k-m)/2)}, \quad Re\, \a>m-1.$$
By   {\rm (iii)},
 this function extends   analytically to $Re\, \a>-1$ and the analytic continuation  belongs to  $ C^\infty (\vnm)$. Now for any test function $\om  \in C^\infty (\vnm)$, owing to (\ref{zn0x}) and (\ref{009a}), we have
\bea
(\underset {\a=0}{a.c.} \,A_\a, \om)&=& \underset {\a=0}{a.c.} \,\left (\vp, \frac{\C^{\a}_{m, k} \om}{\gm(\a/2)}\right )=\left (\vp,  \underset {\a=0}{a.c.} \,\frac{\C^{\a}_{m, k} \om}{\gm(\a/2)}\right )\nonumber\\
&=&c_{k,m}\,(\vp, F_{m, k} \om)=c_{k,m}\,(\fd \vp,  \om),\nonumber\eea
and (\ref{zn0xdr}) follows.
\end{proof}

\section{Some consequences}

\subsection{Inversion of the Funk transform}

\begin{theorem} \label{cr72} Let $\vp=F_{m,k} f$, where $f$ is a $C^\infty$ right
$O(m)$-invariant function on $\vnm$,  $1\le m\le k\le n-m$. Then, for every $v\in \vnm$,
\be \label {forq1y}  \underset
{\a=k+m-n}{a.c.} \frac{(\cd0 \vp)(v)}{\Gam_m(\a/2)}=c\, f(v), \qquad c \!=\!\frac{\gm(n/2)}{\Gam_m (k/2)\, \gm(m/2)}.\ee
\end{theorem}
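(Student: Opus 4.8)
The plan is to combine the inversion-type result for the dual cosine transform (Theorem \ref{lhdu}(v)) with the classical Funk-type inversion in the ``opposite'' geometry. First I would invoke Theorem \ref{lhdu}(v): since $k+m>n$ is precisely the regime excluded in part (iv), the relevant normalization is $\Gam_{n-k}((\a+n-k-m)/2)$, and the analytic continuation at $\a=k+m-n$ equals $\tilde c_{k,m}\,(F_{n-k,n-m}\vp_1)(\tilde v)$, where $\vp_1(\tilde u)=\vp(u)$ and $\tilde v\in V_{n,n-m}$ is a frame spanning $\{v\}^\perp$. The subtlety is bookkeeping: in \eqref{forq1y} the normalizing gamma is written $\Gam_m(\a/2)$, so I would first check that at the point $\a=k+m-n$ one has $\Gam_m(\a/2)=\Gam_m((k+m-n)/2)$ differing from $\Gam_{n-k}((\a+n-k-m)/2)\big|_{\a=k+m-n}=\Gam_{n-k}(0)$ by a harmless nonzero constant — actually, since $\Gam_{n-k}(0)$ sits at the edge of the polar set, the honest route is to track the leading Laurent coefficients and absorb everything into the constant $c$, verifying at the end that $c$ comes out to $\gm(n/2)/(\Gam_m(k/2)\,\gm(m/2))$.

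Next I would insert the hypothesis $\vp=F_{m,k}f$. The point is that $F_{n-k,n-m}(\vp_1)$ is then a \emph{composition} of two Funk transforms in complementary Grassmannians, and this composition is, up to an explicit constant, the identity on right $O(m)$-invariant functions — this is the higher-rank analogue of the fact that on the sphere the Funk transform composed with its natural partner recovers the original (even) function. Concretely, $\vp_1(\tilde u)=\vp(u)=(F_{m,k}f)(u)$ means $\vp_1$ is an average of $f$ over $m$-frames orthogonal to $\{u\}=\{\tilde u\}^\perp$, i.e. over $m$-subspaces \emph{contained in} $\{\tilde u\}$; then $(F_{n-k,n-m}\vp_1)(\tilde v)$ averages $\vp_1$ over $(n-k)$-subspaces orthogonal to $\{\tilde v\}=\{v\}^\perp$, i.e. $(n-k)$-subspaces containing $\{v\}$. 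Composing the two averagings, one integrates $f(w)$ over $m$-frames $w$ whose span is orthogonal to some $(n-k)$-subspace containing $\{v\}$ — and by a Fubini/homogeneity argument on the relevant Stiefel fibration every admissible $\{w\}$ passes through $\{v\}$ with constant multiplicity, so the double integral collapses to $\const\cdot f(v)$ (using right $O(m)$-invariance of $f$ so that only the subspace $\{v\}$ matters). I would make this precise using the same polar-coordinate / frame-parametrization machinery as in the proof of Theorem \ref{lhgn}(v) and the coarea-type identities in Section 2.

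Putting the pieces together: $a.c.\big|_{\a=k+m-n}(\cd0\vp)(v)/\Gam_m(\a/2)=(\text{const})\cdot(F_{n-k,n-m}\vp_1)(\tilde v)=(\text{const}')\cdot f(v)$, and a final constant-chase — multiplying $\tilde c_{k,m}=\Gam_{n-k}(n/2)/(\Gam_{n-k}(m/2)\Gam_{n-k}((n-m)/2))$ by the Funk-composition constant and the gamma-ratio from the renormalization — should collapse to $c=\gm(n/2)/(\Gam_m(k/2)\,\gm(m/2))$. I expect the main obstacle to be exactly this last point: proving that the iterated Funk transform $F_{n-k,n-m}\circ(\text{identification})\circ F_{m,k}$ acts as a scalar on right $O(m)$-invariant smooth functions, and pinning down that scalar. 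The cleanest way is probably to test against $f\equiv 1$ using \eqref{mnvb}-type evaluations (where everything reduces to ratios of Siegel gamma functions) to identify the constant, and to get the ``scalar'' claim either from Schur-type irreducibility considerations or, more elementarily, directly from the geometry of the double fibration as sketched above.
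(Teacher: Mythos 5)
Your first step is based on a misreading of the hypotheses, and the argument collapses there. Theorem \ref{cr72} assumes $1\le m\le k\le n-m$, i.e.\ $k+m\le n$, which is exactly the regime where Theorem \ref{lhdu}(v) does \emph{not} apply (it requires $k+m>n$). In this regime the operator $F_{n-k,n-m}$ you want to invoke does not even exist: the incidence set $\{\tilde u\in V_{n,n-k}:\tilde v'\tilde u=0\}$ is nonempty only when $(n-k)+(n-m)\le n$, i.e.\ $k+m\ge n$. The applicable statement is Theorem \ref{lhdu}(iv), but it only evaluates the continuation at $\a=0$, whereas here the evaluation point is $\a=k+m-n$, a nonpositive integer that lies precisely in the possible polar set $\{-1,-2,\dots\}$ of $(\cd0\vp)(v)/\gm(\a/2)$ identified in part (iv). That is the actual content of the theorem (see the remark immediately following it): one must show these poles disappear when $\vp$ lies in the range of $F_{m,k}$, and nothing in your proposal addresses this. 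The paper does it by quoting the intertwining identity from Part I (equation (\ref{gty})), which rewrites $(\cd0 F_{m,k}f)(v)/\gm(\a/2)$ as an explicit constant times the normalized sine transform $(Q^{\a+n-k-m}f)(v)/\gm((\a+n-k-m)/2)$; the latter is entire in $\a$ by Theorem \ref{lhgns}, and setting $\a=k+m-n$ reduces the claim to (\ref{zn0q}).

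Even setting the regime issue aside, the central geometric claim in your second paragraph is false: the composition of two Funk transforms is not a scalar multiple of the identity. Already on $S^{n-1}$ ($k=m=1$), $F\circ F$ averages $f$ over all $w$ orthogonal to some $v\perp u$, which is a genuinely smoothing operator, not $\const\cdot f(u)$; Funk--Radon transforms are inverted by nonlocal operators, not by further Funk transforms. Your own description also contradicts the conclusion you draw from it: an $m$-subspace $\{w\}$ orthogonal to an $(n-k)$-subspace containing $\{v\}$ is \emph{orthogonal} to $\{v\}$, not equal to or passing through it, so the double fibration does not collapse onto the point $\{v\}$. Testing against $f\equiv 1$ via (\ref{mnvb}) would fix a constant but cannot establish that the composition is scalar in the first place. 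The missing ingredient throughout is the Part I identity (\ref{gty}) tying the dual cosine transform of $F_{m,k}f$ to the sine transform of $f$; without it (or an equivalent substitute) the analytic continuation to $\a=k+m-n$ cannot be justified.
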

\begin{proof} By \cite [Theorem 4.3]{Ru10}, if $Re\, \a >m-1$, then
\be\label{gty} \frac{(\cd0 \vp)(v)}{\Gam_m(\a/2)}=\frac{\Gam_m((n-m)/2)}{\Gam_m(k/2)} \, \frac{(Q^{\a+n-k-m} f)(v)}{\Gam_m((\a+n-k-m)/2)},\ee
 where $Q^{\a+n-k-m} f$ is the sine transform from Section \ref {64g5}.
By Theorem \ref{lhgns}, the right-hand side of (\ref{gty}) extends as an entire function of $\a$. Now, taking analytic continuation  and using (\ref{zn0q}), we obtain the result.
\end{proof}

\begin{remark}{\rm It is interesting to note that in general, the function 
$ \a \to (\cd0 \vp)(v)/\gm(\a/2)$ admits  poles $ \;
-1, -2,\dots\;$; see Theorem \ref{lhdu}  (iv). However, as (\ref{gty}) reveals, these poles  disappear if $\cd0$ acts on the image of the Funk transform $F_{m,k}$.}
\end{remark}

\begin{remark}{\rm Theorem \ref{cr72} can be reformulated in the language of Grassmannians. The reader can  easily do this, using connection formulas from \cite[Section 3.2]{Ru10}. A series of inversion results for the  Funk-Radon transform on Grassmannians can be found in  the fundamental works by Gelfand and his collaborators \cite{GGR, GGS}, Grinberg \cite {Gri},  Kakehi \cite {Ka}; see also a pioneering paper by Petrov  \cite{P67},  Grinberg and  Rubin \cite {GR}, and  Zhang
\cite {Zh1}.  Diverse problems of integral geometry related to Grassmann manifolds were  studied in \cite {GGo, Go, GK1, GK2, He, Ru04, Shi}.
 Our  Theorem \ref{cr72} has a completely different flavor,  agrees with the known case $m=1$ (cf. formula (1.13) in \cite{Ru02}) and sheds new light to this circle of  problems.}
\end{remark}

\subsection{A composition formula; the case $k=m$}

In this particular case the cosine transform $(M^{\a}f)(u)=(\C^{\a}_{m, m} f)(u)$, $u \!\in\! V_{n,m}$, has a number of important features. We normalize it by setting
\bea (\Ms^{\a}f)(u)&=&\del_{n,m} (\a)\, (M^{\a}f)(u)\nonumber\\
\label{nmn}&=& \del_{n,m} (\a)\!\int_{\vnm} \!\!\!f(v)\, |u'v|^{\a -m} \,
d_*v,  \eea
\be \del_{n,m}
(\a)=\frac{\Gam_m(m/2)}{\Gam_m(n/2)}\,\frac{\Gam_m((m-\a)/2)}{\Gam_m(\a/2)}, \qquad \a\notin \bbn=\{1,2, \ldots\,\}.\nonumber\ee
The excluded values of $\a$ form the polar set of $\Gam_m((m-\a)/2)$. Integral (\ref{nmn}) converges absolutely for any integrable function $f$ when $Re \, \a> m-1$. If $f\in C^\infty (\vnm)$, then, by  Theorem \ref{lhgn}, analytic continuation of a function $\a \to (\Ms^{\a}f)(u)$ is well-defined for all complex $\a\neq 1,2, \ldots\,$ and belongs to $C^\infty (\vnm)$. We denote
\be
(\Ms_{a.c.}^{\a}f)(u)= \underset
{\a\notin \bbn}{a.c.} \,(\Ms^{\a}f)(u).\ee

\begin{theorem} \label{cr24n} Let $f\in C^\infty (\vnm)$ be a  right
$O(m)$-invariant function on $\vnm$, $2m \le n$. If $\a, 2m -\a -n \notin \bbn$, then for every $u\in \vnm$,
\be \label {for1n} (\Ms_{a.c.}^{2m -\a -n} \Ms_{a.c.}^{\a}f)(u)=f(u).\ee
\end{theorem}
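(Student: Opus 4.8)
The plan is to reduce the composition formula \eqref{for1n} to a one-parameter identity for the normalized cosine transform $\Ms^\a$ acting on right $O(m)$-invariant functions, and then to verify that identity by first checking it on the region where all integrals converge absolutely and then invoking analytic continuation. The natural starting point is the composition formula from Part~I, which holds in the sense of distributions (or for $\operatorname{Re}\a$ large): there is an explicit composition rule expressing $M^\b \circ M^\a$, for suitable $\a,\b$, as a constant multiple of $M^{\a+\b-\text{(shift)}}$ or, in the critical case $\b=2m-\a-n$, as a multiple of the identity. With the normalization constants $\del_{n,m}(\a)$ folded in, this becomes the clean statement $\Ms^{2m-\a-n}\Ms^\a f=f$ on a suitable strip of $\a$.

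First I would record, from \cite{Ru10}, the composition formula for the cosine transforms in the case $k=m$ in the form $M^\b (M^\a f) = c(\a,\b)\, M^{\a+\b-?}f$, valid for $\operatorname{Re}\a$, $\operatorname{Re}\b$ large enough, together with the explicit constant $c(\a,\b)$ in terms of Siegel gamma functions $\Gam_m$. Specializing $\b = 2m-\a-n$ so that the exponent of the composed transform lands at the value giving (after normalization) the identity, and combining with Theorem~\ref{lhgn}(v) applied appropriately — or more directly with the fact (from the proof of Theorem~\ref{lhgns}(iv), i.e.\ \eqref{zn0q}) that $\Ms^\a_{a.c.}$ at the relevant parameter value is a constant multiple of the identity on right $O(m)$-invariant functions — I would check that the product of all the normalization constants telescopes to $1$. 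This is the routine but essential bookkeeping step: one must verify that $\del_{n,m}(2m-\a-n)\,\del_{n,m}(\a)$ times the composition constant $c(\a,2m-\a-n)$ equals $1$ identically in $\a$; the reflection-type functional equation of $\Gam_m$ (built from $\Gamma(s)\Gamma(1-s)$) is what makes this work, and it explains the precise shape of $\del_{n,m}$.

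Next I would address the passage from $\operatorname{Re}\a$ large to all admissible $\a$. For $f\in C^\infty(\vnm)$ right $O(m)$-invariant, Theorem~\ref{lhgn}(ii)--(iv) guarantees that $\a\mapsto(\Ms^\a_{a.c.}f)(u)$ is analytic away from $\bbn$ and stays in $C^\infty(\vnm)$; hence $\Ms^{\a}_{a.c.}f$ is again an admissible input for the outer transform, and $\a\mapsto(\Ms^{2m-\a-n}_{a.c.}\Ms^{\a}_{a.c.}f)(u)$ is meromorphic in $\a$ with poles only at the excluded set $\{\a:\a\in\bbn\}\cup\{\a:2m-\a-n\in\bbn\}$. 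Since the identity $(\Ms^{2m-\a-n}\Ms^{\a}f)(u)=f(u)$ holds on an open subset of $\bbc$ (where everything converges absolutely and the Part~I composition formula applies pointwise, using that $\Del^\ell$-type representations of the a.c.\ are smooth in the frame variable), it persists by uniqueness of analytic continuation wherever both sides are defined, i.e.\ for $\a,\,2m-\a-n\notin\bbn$. One subtlety to handle with care: one must make sure the outer transform is being applied to the \emph{analytically continued} inner transform and that "a.c.\ of the composition" equals "composition of the a.c.'s"; this follows because the inner a.c.\ is, by \eqref{oaoc}, a zeta integral of a Schwartz function depending smoothly (indeed holomorphically in $\a$, entire after normalization) on everything, so the outer integral commutes with taking a.c.

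The main obstacle I expect is precisely this interchange-of-analytic-continuation point together with the constant-chasing: one has to be confident that forming $\Ms^{\a}_{a.c.}f$ first and then applying $\Ms^{2m-\a-n}$ yields the same meromorphic function of $\a$ as the composed operator analyzed in Part~I, and that no spurious poles are introduced or cancelled incorrectly when the two normalizing gamma factors are multiplied. Everything else — absolute convergence for $\operatorname{Re}\a$ large, smoothness in $u$, the explicit reflection identity for $\Gam_m$ — is either already in the excerpt (Theorems~\ref{lhgn} and \ref{lhgns}) or a direct computation. Granting the Part~I composition formula and these ingredients, the theorem follows.
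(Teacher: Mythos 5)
Your overall strategy --- verify the composition identity on an open region of $\a$ where both transforms are given by absolutely convergent integrals, then extend by uniqueness of analytic continuation --- cannot be carried out, because no such region exists. The inner transform $\Ms^{\a}f$ converges absolutely only for $\operatorname{Re}\a>m-1$, while the outer one at parameter $\b=2m-\a-n$ would require $\operatorname{Re}\b>m-1$, i.e.\ $\operatorname{Re}\a<m+1-n\le 1-m$; since $2m\le n$ and $m\ge 1$, these two conditions are incompatible, so the claimed ``open subset of $\bbc$ where everything converges absolutely'' is empty. For the same reason the composition formula of Part~I is available only as a distributional statement that already involves analytic continuation in $\b$, namely $\underset{\b=2m-\a-n}{a.c.}\,(\Ms^{\b}\Ms^{\a}f,\om)=(f,\om)$ for test functions $\om$ (this is \cite[Theorem 6.4]{Ru10}, the paper's actual starting point); there is no pointwise formula $M^{\b}(M^{\a}f)=c(\a,\b)\,M^{\gamma}f$ to specialize, and no telescoping of normalization constants via a reflection formula for $\Gam_m$ occurs in the argument.

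The two steps your proposal omits are exactly the substance of the paper's proof. First, the weak identity must be upgraded to a pointwise one: since $\Ms^{\a}f\in C^\infty(\vnm)$ for $\operatorname{Re}\a>m-1$, Theorem \ref{lhgn} shows that $\b\mapsto(\Ms^{\b}_{a.c.}\Ms^{\a}f)(u)$ is well defined for $\b\notin\bbn$ and smooth in $u$, whence $(\Ms_{a.c.}^{\b}\Ms^{\a}f,\om)={a.c.}\,(\Ms^{\b}\Ms^{\a}f,\om)$; testing against all $\om$ and using this smoothness yields the pointwise equality (\ref{jjdl}) on the half-plane $\operatorname{Re}\a>m-1$, $\a\neq m,m+1,\dots$. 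Second, to pass to all admissible $\a$ one needs analyticity of $\a\mapsto(\Ms_{a.c.}^{2m-\a-n}\Ms_{a.c.}^{\a}f)(u)$ along the line $\b=2m-\a-n$; the paper derives this from separate analyticity in $(\a,\b)$ on $\Lam=\{(\a,\b):\a\notin\bbn,\ \b\notin\bbn\}$ together with Hartogs' theorem, and only then invokes uniqueness of analytic continuation. Your appeal to ``uniqueness of analytic continuation wherever both sides are defined'' silently assumes this joint analyticity, which does not follow from the one-variable statements of Theorem \ref{lhgn} alone.
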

\begin{proof} In \cite[Theorem 6.4]{Ru10} we proved that  if  $Re \, \a> m-1$, $ \a\neq m, m+1, m+2, \ldots$, then 
  \be \label {for1}  \underset
{\b=2m -\a -n}{a.c.} \,(\Ms^{\b} \Ms^{\a}f, \om)=(f,
\om), \ee
 for any test function $\om \in C^\infty (\vnm)$. Since $\Ms^{\a}f \in C^\infty (\vnm)$, then, by Theorem \ref{lhgn}, analytic continuation of a function $\b \to (\Ms^{\b} \Ms^{\a}f)(u)$ is well-defined for all $\b\neq 1,2, \ldots\,$ and represents a $C^\infty$  function on $\vnm$. Hence, $$(\Ms_{a.c.}^{\b} \Ms^{\a}f, \om)=({a.c.}\,\Ms^{\b} \Ms^{\a}f, \om)={a.c.}(\Ms^{\b} \Ms^{\a}f, \om),$$ and
 the aforementioned result from
\cite{Ru10} yields $(\Ms_{a.c.}^{2m -\a -n} \Ms^{\a}f, \om)=(f,\om)$. Since,  by Theorem \ref{lhgn}, the function  $(\Ms_{a.c.}^{\b} \Ms^{\a}f)(u)$ is smooth, then we have a pointwise equality
\be\label {jjdl}
(\Ms_{a.c.}^{2m -\a -n} \Ms^{\a}f)(u)=f(u), \ee
provided that
$$Re \, \a> m-1, \qquad \a\neq m, m+1, m+2, \ldots\, .$$
To extend this result to all complex $\a$ such that $\a, 2m -\a -n \notin \bbn$, we observe that $(\Ms_{a.c.}^{2m -\a -n} \Ms_{a.c.}^{\a}f)(u)$ is separately analytic  in $\a$ and $\b$ in the domain $\Lam =\{(\a,\b) \in \bbc^2: \a\notin \bbn, \; \b\notin \bbn \}$. Hence, by   the fundamental Hartogs theorem \cite {Sha}, this function is analytic in $z=(\a,\b)\in \Lam$.  Now   (\ref{for1n}) follows from (\ref{jjdl}) by the uniqueness of analytic continuation.
\end{proof}

\subsection{The $ \mbox{Cos}^\lam$-transforms}

 Transformation (\ref{nkmt}) can be found in the literature under different names (or without naming) and with different notation. For instance, let
\be\label {lvbj}
(\mbox{Cos}^\lam f)(u)=\int_{\vnm} \!\!\!f(v)\,
|u'v|^{\lam -\rho} \, d_*v, \ee$$
Re \, \lam>\rho -1,\qquad  \rho=n/2,\qquad  u\in \vnm;
$$
cf. \cite{OP}, where a similar operator is presented in a slightly different form. Following \cite{OP}, we call (\ref{lvbj})  the $ \mbox{Cos}^\lam$-transform of $f$. The connection between (\ref{lvbj}) and (\ref{nkmt}) is
\be
\mbox{Cos}^\lam f=M^{\lam +m-\rho} f.
\ee
In terms of  (\ref{lvbj}) formula (\ref{zn0xq}) becomes
\be\label{zn023} \underset 
{\lam =\rho -m}{a.c.} \,\frac{\mbox{Cos}^\lam f}{\gm((\lam +m-\rho)/2)}=c_{m,m}\,F_{m}f. \ee
 The
 composition formula (\ref{for1n}) transforms to
\be
\mbox{\it Cos}^{-\lam} \mbox{\it Cos}^\lam f =f, \qquad \pm\lam +m-\rho\neq 1,2, \ldots\, ,
\ee
where $\mbox{\it Cos}^\lam$ denotes analytic continuation of the normalized integral
\be
(\mbox{\it Cos}^\lam f)(u)= \tilde \del_{n,m}
(\lam)\int_{\vnm} \!\!\!f(v)\,
|u'v|^{\lam -\rho} \, d_*v.
\ee
\be  \tilde \del_{n,m}
(\lam)=\frac{\Gam_m(m/2)}{\Gam_m(\rho)}\,\frac{\Gam_m((\rho-\lam)/2)}{\Gam_m((\lam +m-\rho)/2)}.\nonumber\ee

\subsubsection{The case $m=1$} This case deserves special mentioning. The corresponding  cosine transforms on the unit sphere $S^{n-1}$ in $\bbr^n$ are well-known in analysis.
We recall some results; see a survey  article \cite{{Ru03}} and references therein. In the ``$\lam$-notation'' the normalized ``rank-one'' $ \mbox{Cos}^\lam$-transform is defined as analytic continuation of the  integral
\be
(\mbox{\it Cos}^\lam f)(u)= \frac{\pi^{1/2}\, \Gam ((\rho-\lam)/2)}{\Gam (\rho)\,\Gam ((\lam -\rho+1)/2)}
\int_{S^{n-1}} \!\!\!f(v)\,
|u\cdot v|^{\lam -\rho} \, d_*v,
\ee
 where $\rho=n/2$ and $d_*v$ is the probability measure on  $S^{n-1}$.  Let
$\{ Y_{j, \nu} (v) \}$ be an orthonormal basis of spherical
harmonics on $S^{n-1}$.  Here $j = 0, 1, 2, \dots $, and
$\nu = 1, 2, \dots, d_n (j)$, where $d_n (j)$ is the dimension of the
subspace of spherical harmonics of degree $j$. If    \be f = \sum\limits_{j, k} f_{j, k} Y_{j, k}, \qquad  f_{j, k}=
\int_{S^{n-1}} f(v)\, Y_{j, k}(v) \,dv,\ee (the Fourier-Laplace expansion
of $f$), then
\be \mbox{\it Cos}^\lam f=\sum\limits_{j, k} c_{j,\lam} f_{j, k} Y_{j, k},
\ee
where
\be  c_{j,\lam} =\left\{
\begin{array}{cl} (-1)^{j/2} \,\displaystyle{{\Gamma
((j+\rho -\lam)/2)\over \Gamma ((j+\rho +\lam)/2)} }
&  \mbox{\rm if $j$ is even}, \\
0 &  \mbox{\rm if $j$ is odd},
\end{array}
\right. \ee

Let $L_{p, \text{e}}\equiv L_{p, \text{e}}(S^{n-1})$ be the Lebesgue space of even $p$-integrable functions on $S^{n-1}$.
The asymptotic of $ c_{j,\lam}$ as $j \to \infty$ implies the following embeddings of the range $\mbox{\it Cos}^\lam ( L_{p, \text{e}})$, $1<p<\infty$, into the relevant  Sobolev spaces $L^\gamma_{p, \text{e}}$.

\begin{theorem} {\rm (cf. Corollary 3.3 in \cite{Ru03})} \ Let
$$
 \gamma_{\pm} =Re \, \lam  \pm \Big |
\frac1{p} - \frac12 \Big | (n -1),$$
\[
\lam \notin \{ \rho, \rho +2, \rho+4, \dots \} \cup \{ - \rho -1, - \rho - 3, -\rho - 5,
\dots \}.
\]
The following proper embeddings hold:
\be L^{\gamma_{-}}_{p, \text{e}} \subset \mbox{\it Cos}^\lam ( L_{p, \text{e}}) \subset L^{\gamma_{+}}_{p, \text{e}}. \ee
If $p=2$, then
\[
\mbox{\it Cos}^\lam  (L_{2,\text{e}})=L^{Re \, \lam} _{2, \text{e}}.
\]
\end{theorem}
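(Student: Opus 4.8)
The plan is to read everything off the spherical--harmonic description of $\mbox{\it Cos}^\lam$ recorded just above the statement: $\mbox{\it Cos}^\lam$ is the $O(n)$--invariant multiplier operator on $S^{n-1}$ with symbol $c_{j,\lam}=0$ for odd $j$ and $c_{j,\lam}=(-1)^{j/2}\Gam((j+\rho-\lam)/2)/\Gam((j+\rho+\lam)/2)$ for even $j$, $\rho=n/2$. First I would record the two--sided asymptotics of this symbol. Stirling's formula for ratios of $\Gamma$--functions gives
\[
\frac{\Gam((j+\rho-\lam)/2)}{\Gam((j+\rho+\lam)/2)}=\Big(\frac{j+\rho}{2}\Big)^{-\lam}\big(1+O(1/j)\big),\qquad j\to\infty ,
\]
with a full asymptotic expansion in powers of $1/j$ controlling also its higher finite differences in $j$. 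Since $\lam$ lies outside the excluded set, $c_{j,\lam}$ is finite and non--zero for every even $j$, so $|c_{j,\lam}|\asymp(1+j)^{-Re\,\lam}$ uniformly in $j$. The case $p=2$ then follows at once from Parseval's identity: with $L^{s}_{2,\text{e}}=\{f:\sum_j(1+j)^{2s}\big(\sum_k|f_{j,k}|^2\big)<\infty\}$, the function $g=\mbox{\it Cos}^\lam f$ lies in $L^{Re\,\lam}_{2,\text{e}}$ iff $\sum_j(1+j)^{2Re\,\lam}|c_{j,\lam}|^2\big(\sum_k|f_{j,k}|^2\big)<\infty$ iff $f\in L_{2,\text{e}}$, and surjectivity onto $L^{Re\,\lam}_{2,\text{e}}$ is obtained by dividing the coefficients by the non--vanishing $c_{j,\lam}$.

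For $1<p<\infty$, $p\neq2$, I would factor the symbol as
\[
c_{j,\lam}=(-1)^{j/2}\cdot\big(1+j(j+n-2)\big)^{-\lam/2}\cdot w_j(\lam),\qquad w_j(\lam)\to1 .
\]
The asymptotic expansion above shows that $|w_j(\lam)|$ is bounded away from $0$ and $\infty$ and that the $\ell$th--order finite differences of $w_j(\lam)$ in $j$ are $O(j^{-\ell-1})$, so by standard multiplier theory for spherical harmonic expansions the operator $\mathcal B_\lam$ with symbol $w_j(\lam)$ is bounded and boundedly invertible on $L_{p,\text{e}}$. The middle factor is the symbol of $(1-\Delta)^{-\lam/2}$, where $\Delta$ is the Laplace--Beltrami operator of $S^{n-1}$; writing $(1-\Delta)^{-\lam/2}=(1-\Delta)^{-Re\,\lam/2}(1-\Delta)^{-i\,Im\,\lam/2}$, the first factor is by definition an isomorphism between Bessel--potential spaces whose orders differ by $Re\,\lam$, while the second is bounded and boundedly invertible on $L_{p,\text{e}}$, $1<p<\infty$, by the classical $L^p$--boundedness of imaginary powers of the Laplacian. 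Since all of these are invariant multipliers, hence commute, and since $\mathcal B_\lam$ and $(1-\Delta)^{-i\,Im\,\lam/2}$ are absorbed by $L_{p,\text{e}}$, we obtain
\[
\mbox{\it Cos}^\lam(L_{p,\text{e}})=(1-\Delta)^{-Re\,\lam/2}\,\mathcal S(L_{p,\text{e}}),\qquad \mathcal S:\ Y_j\mapsto(-1)^{j/2}Y_j\ \ (j\text{ even}),
\]
which reduces the statement to the $L^p$--Sobolev mapping properties of the ``sign'' operator $\mathcal S$ alone. Up to composition with the Bessel potential of the explicit order $\rho-1=(n-2)/2$, $\mathcal S$ is the normalized Funk transform $F_1$ (cf.\ (\ref{zn023})), and the fact I would then invoke is exactly \cite[Corollary~3.3]{Ru03} together with the estimates behind it: $\mathcal S$ maps $L_{p,\text{e}}$ into, and onto a subspace trapped between, the Bessel--potential spaces $L^{|1/p-1/2|(n-1)}_{p,\text{e}}$ and $L^{-|1/p-1/2|(n-1)}_{p,\text{e}}$, with both inclusions proper for $p\neq2$ and collapsing to $L_{2,\text{e}}$ when $p=2$. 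Applying the order--$Re\,\lam$ isomorphism $(1-\Delta)^{-Re\,\lam/2}$ and recalling $\gamma_{\pm}=Re\,\lam\pm|1/p-1/2|(n-1)$ then yields the asserted chain of proper embeddings.

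The crux, and the step I would not redo, is the sharp off--diagonal ($p\neq2$) behaviour of $\mathcal S$, equivalently the sharp $L^p$--regularity of the spherical Radon (Funk) transform on $S^{n-1}$: one side of the sandwich comes from interpolating the $L^1/L^\infty$ and $L^2$ mapping properties of $F_1$, the other from duality, and the \emph{properness} of both inclusions requires producing witnessing functions---lacunary series supported on dyadic degrees with suitably chosen signs---that lie in one space but not in the other. All of this is carried out in \cite{Ru03}. Everything else above---the $\Gamma$--ratio asymptotics, the multiplier estimates for $w_j(\lam)$, the commutation of the invariant multipliers, the boundedness of imaginary powers, and the $p=2$ computation---is routine.
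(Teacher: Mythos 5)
You should know at the outset that the paper contains no proof of this statement: it is imported from \cite[Corollary 3.3]{Ru03} with nothing more than the one-line remark that it follows from the asymptotics of $c_{j,\lam}$, so there is no in-paper argument to measure yours against. Judged on its own terms, your reduction is correct and well organized: the Stirling asymptotics, the $p=2$ case by Parseval, the factorization through $(1-\Del)^{-\lam/2}$ and a bounded invertible Mikhlin-type multiplier $w_j(\lam)$ (a small slip: $w_j(\lam)\to 2^{\lam}$, not $1$, since $c_{j,\lam}\sim(-1)^{j/2}(j/2)^{-\lam}$ while $(1+j(j+n-2))^{-\lam/2}\sim j^{-\lam}$ -- harmless), and the absorption of the imaginary powers are all standard and sound. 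But everything that is actually the theorem -- the two-sided Sobolev sandwich for the unimodular multiplier $\mathcal{S}:Y_j\mapsto(-1)^{j/2}Y_j$ and the properness of both inclusions -- you park in the very citation the paper itself leans on. So the proposal proves the easy periphery and defers the crux.

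More importantly, the route you sketch for that crux does not produce the stated exponent. Interpolating $F_1:L_{1,\text{e}}\to L_{1,\text{e}}$ with $F_1:L_{2,\text{e}}\to L^{(n-2)/2}_{2,\text{e}}$ gives a loss of $(n-2)|1/p-1/2|$ relative to the $L^2$ smoothing order, not $(n-1)|1/p-1/2|$; and ``duality'' gives another upper bound (for the adjoint on $L^{p'}$), not the lower inclusion $L^{\gamma}_{p,\text{e}}\subset \mbox{\it Cos}^\lam(L_{p,\text{e}})$, which requires controlling $\mbox{\it Cos}^{-\lam}$, a different operator from the adjoint. The exponent $(n-1)|1/p-1/2|$ in the statement is not a sharp oscillating-multiplier loss at all but the soft one obtained by sandwiching the exact $L^2$ isomorphism $\mbox{\it Cos}^\lam:L_{2,\text{e}}\to L^{Re\,\lam}_{2,\text{e}}$ between the Sobolev embeddings $L_{p,\text{e}}\hookrightarrow L^{-(n-1)(1/p-1/2)}_{2,\text{e}}$ and $L_{2,\text{e}}\hookrightarrow L_{p,\text{e}}$ for $p\le 2$ (plus duality for $p\ge 2$); that is presumably the argument behind \cite[Corollary 3.3]{Ru03}, and it also trivializes much of what you flag as hard. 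Finally, your own (correct) bookkeeping -- range contained in the space of order $Re\,\lam-\delta$ and containing the space of order $Re\,\lam+\delta$ -- yields $L^{\gamma_+}_{p,\text{e}}\subset\mbox{\it Cos}^\lam(L_{p,\text{e}})\subset L^{\gamma_-}_{p,\text{e}}$, i.e.\ with $\gamma_\pm$ in the opposite slots from the display (which, read literally, would force $L^{\gamma_-}_{p,\text{e}}\subset L^{\gamma_+}_{p,\text{e}}$ with $\gamma_-<\gamma_+$ and is vacuous for $p\ne2$); you should have flagged this discrepancy instead of asserting that it ``yields the asserted chain.''
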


It is a challenging problem to possibly extend these results to the higher-rank case $m>1$.

\bibliographystyle{amsplain}

\end{document}